\newcommand{\norm}[1]{\left\|#1\right\|}
\newtheorem{remark}{Remark}
\newtheorem{corollary}{Corollary}
\newtheorem{lemma}{Lemma}
\newtheorem{example}{Example}
\newtheorem{assumption}{Assumption}
\newtheorem{theorem}{Theorem}
\newcommand{\bmat}[1]{\begin{bmatrix}#1\end{bmatrix}}
\crefname{equation}{}{}
\crefname{theorem}{Theorem}{Theorems}
\crefname{corollary}{Corollary}{Corollaries}
\crefname{example}{Example}{Examples}
\crefname{assumption}{Assumption}{Assumptions}
\crefname{lemma}{Lemma}{Lemmas}
\crefname{proposition}{Proposition}{Propositions}
\crefname{figure}{Figure}{Figures}
\crefname{table}{Table}{Tables}
\crefname{section}{Section}{Sections}
\crefname{appendix}{Appendix}{Appendices}
\newtheorem{proposition}{Proposition}
\newcommand{\tr}{{{\mathsf T}}}
\newcommand{\mK}{{\mathsf{K}}}
\newcommand{\mM}{{\mathsf{M}}}
\newcommand{\mZ}{{\mathsf{Z}}}
\DeclareMathOperator*{\Tr}{Tr}
\title{\LARGE \bf
%On  the Optimization Landscape of Linear Dynamic Output \\ Feedback Control with $\mathcal{H}_\infty$ Robustness Constraints:\\ Topological Properties of  the Feasible Set
% Feasible Region Analysis of Dynamical Output Feedback Control with H-infinity Robustness Guarantee
Connectivity of the Feasible and Sublevel Sets of Dynamic Output Feedback Control with Robustness Constraints
}
\author{Bin Hu$^{1}$, and Yang Zheng$^{2}$% <-this % stops a space
\thanks{B. Hu is generously supported by the NSF award CAREER-2048168 and the 2020 Amazon
research award.}
\thanks{$^{1}$Bin Hu is with the Coordinated Science Laboratory (CSL) and the Department of Electrical and Computer Engineering, University of Illinois at Urbana-Champaign,
        {\tt\small 
binhu7@illinois.edu
}}%
 \thanks{$^{3}$Yang Zheng is  with  the Department of Electrical and Computer Engineering, University of California San Diego,
        {\tt\small zhengy@eng.ucsd.edu}}%       
}
\begin{document}

\maketitle
\thispagestyle{empty}
\pagestyle{empty}

%%%%%%%%%%%%%%%%%%%%%%%%%%%%%%%%%%%%%%%%%%%%%%%%%%%%%%%%%%%%%%%%%%%%%%%%%%%%%%%%
\begin{abstract}%
This paper considers the optimization landscape of linear dynamic output feedback control with $\mathcal{H}_\infty$  robustness constraints.  
 We consider the feasible set of all the stabilizing full-order dynamical controllers that satisfy an additional $\mathcal{H}_\infty$ robustness constraint.  
 We show that  this $\mathcal{H}_\infty$-constrained set  has at most two path-connected components that are diffeomorphic under a mapping defined by a similarity transformation. Our proof technique utilizes a classical change of variables in $\mathcal{H}_\infty$ control to establish a subjective mapping from a set with a convex projection to the  $\mathcal{H}_\infty$-constrained set. This proof idea can also be used to establish the same topological properties of strict sublevel sets of linear quadratic Gaussian (LQG) control and optimal $\mathcal{H}_\infty$ control. 
 Our results bring positive news for gradient-based policy search on robust control problems.
\end{abstract}

\begin{keywords}%
Optimization landscape, sublevel set, direct policy search, $\mathcal{H}_\infty$ control, LQG control
\end{keywords}

\section{Introduction}
\label{sec:intro}
Inspired by the impressive successes of reinforcement learning, model-free policy optimization techniques are receiving renewed interests from the controls field. 
Indeed, we have seen significant recent advances on understanding the theoretical properties of policy optimization methods~on benchmark control problems, % relatively simple linear control problems 
such as linear quadratic regulator (LQR) \cite{pmlr-v80-fazel18a,malik2019derivative,mohammadi2021convergence,furieri2020learning}, %\cite{pmlr-v80-fazel18a,bu2019lqr,malik2018derivative,yang2019global,krauth2019finite,mohammadi2019global,mohammadi2020linear,mohammadi2021convergence,fatkhullin2020optimizing,bu2020global,furieri2020learning,li2021distributed,perdomo2021stabilizing,hambly2021policy}, 
linear robust control \cite{zhang2021policy,zhang2020stability,gravell2020learning,zhang2021derivative}, 
and Markov jump linear quadratic control~\cite{jansch2020convergence,rathod2021global,jansch2020policyA}.
%~\cite{jansch2020convergence,jansch2020policyMDP,rathod2021global,jansch2020policyA}.

It is well-known that all these control problems are~non-convex in the policy space. Classical control theory typically parameterizes the control policies into a convex domain over which efficient optimization algorithms exist~\cite{zhou1996robust}.~An important recent discovery is that despite non-convexity, many state-feedback control problems (e.g., LQR) admit a useful property of \textit{gradient dominance}~\cite{pmlr-v80-fazel18a}. Therefore, model-free policy search methods are guaranteed to enjoy global convergence for these problems~\cite{pmlr-v80-fazel18a,furieri2020learning,jansch2020convergence}. Note that most convergence results require a direct access of the underlying system state, in which a simple change of variables exist to get a convex reformulation of the control problems~\cite{sun2021learning}.     

%For linear state-feedback control, global convergence guarantees for gradient-based policy optimization methods have been established in various settings \cite{pmlr-v80-fazel18a,malik2018derivative,mohammadi2021convergence,furieri2020learning,perdomo2021stabilizing,zhang2021policy,gravell2020learning,jansch2020convergence,qu2021exploiting}.

For real-world control applications, however, we may only have access to partial output measurements. 
In the output feedback case, the theoretical results for direct policy search are much fewer and far less complete~\cite{feng2019exponential,fatkhullin2020optimizing,zheng2021analysis,duan2022optimization,umenberger2022globally}. It remains unclear whether model-free policy gradient methods can be modified to yield global convergence guarantees. It has been revealed that the set of stabilizing static output-feedback controllers can be highly disconnected \cite{feng2019exponential}. This is quite different from the state feedback case \cite{bu2019topological}. Such a negative result indicates that the performance of gradient-based policy search on static output feedback control highly depends on the initialization,  and
only convergence to stationary points  has been established \cite{fatkhullin2020optimizing}.
It is thus natural to investigate dynamical controllers for the output feedback case, and to see whether the corresponding optimization landscape is more favorable for direct policy search methods. 
%
%Then it becomes natural to investigate whether similar issues will show up when dynamic controllers are used.
The very recent work~\cite{zheng2021analysis} shows that  the set of stabilizing full-order dynamical controllers~has at
most two path-connected components that are identical in the frequency domain. %in the sense that they are diffeomorphic under a mapping defined by a similarity transformation. 
This brings some positive news and opens the possibility of developing global convergent policy search methods for dynamical output feedback problems, such as linear quadratic Gaussian (LQG) control ~\cite{zheng2021analysis}. Two  other recent studies are~\cite{duan2022optimization,umenberger2022globally}.  In \cite{umenberger2022globally}, the global convergence of policy search over dynamical filters was proved for a simpler estimation~problem.  

It is well-known that the optimal LQG controller has no robustness guarantee~\cite{doyle1978guaranteed}. It is thus important to explicitly incorporate robustness constraints for the search of dynamical controllers. In this paper, we study the topological properties of the feasible set for linear dynamical output feedback control with $\mathcal{H}_\infty$ robustness constraints.
The $\mathcal{H}_\infty$ constraints have been widely used in robust
control \cite{zhou1996robust,Dullerud99} and risk-sensitive control \cite{whittle1990risk}.
%Policy search with such constraints can potentially lead to more robust controllers.
Our main result shows that the set of all stabilizing full-order dynamical  controllers satisfying an additional input-output $\mathcal{H}_\infty$ constraint has at
most two path-connected components, and they are diffeomorphic under a mapping defined by a
similarity transformation. Our proof technique is inspired by~\cite{zheng2021analysis} and relies on a non-trivial but known change of variables for $\mathcal{H}_\infty$ control~\cite{gahinet1994linear,scherer1997multiobjective}. If the control cost is invariant under similarity transformation, one can initialize the local policy search anywhere within the feasible set and there is always a continuous path connecting the initial point to a global minimum.
Our result sheds new light on model-free policy search for robust control tasks.

The rest of this paper is organized as follows. In \Cref{section:problem_statement}, we formulate the linear dynamic output feedback control with $\mathcal{H}_\infty$ constraints as a constrained policy optimization problem. %, and  discuss why it is important to study the topological properties of the resultant feasible set.  
\Cref{sec:connectivity} presents our main theoretical results. We revisit connectivity of strict sublevel sets for LQG and $\mathcal{H}_\infty$ control in \Cref{section:sublevel-set}. Some illustrative examples are shown in \Cref{section:examples}. We conclude the paper in  \cref{section:conclusion}. Some auxillary proofs and results are provided in the appendix.

{\textit{Notations}}: %We use $\mathbb{R}$ and $\mathbb{N}$ to denote the set of real and natural numbers, respectively. 
The set of  $k\times k$ real symmetric matrices is denoted by $\mathbb{S}^k$, and the determinant of a square matrix $M$ is denoted by $\det M$.  We use $I_k$ to denote the $k\times k$ identity matrix, and use $0_{k_1\times k_2}$ to denote the $k_1\times k_2$ zero matrix; we sometimes omit their dimensions if they are clear from the context.   Given a matrix $M \in \mathbb{R}^{k_1 \times k_2}$, $M^\tr$ denotes the transpose of $M$. For any $M_1,M_2\in\mathbb{S}^k$, we use $M_1\prec M_2$ ($M_1\preceq M_2$) and $M_2\succ M_1$ ($M_2\succeq M_1$) to mean that $M_2-M_1$ is positive (semi)definite. %and use $M_1\preceq M_2$ and $M_2\succeq M_1$ to mean that $M_2-M_1$ is positive semidefinite. 

%Finally, we denote the set of real-rational proper stable transfer functions as $\mathcal{RH}_{\infty}$.
%For a linear time-invariant system, we denote its $\mathcal{H}_\infty$ norm by $\norm{\cdot}_\infty$.

%We denote the set of $k \times k$ real invertible matrices by $\mathrm{GL}_k=\{T\in\mathbb{R}^{k\times k} \mid \det T\neq 0\}$.

%\footnote{Technically speaking, projected gradient ascent is needed for maximization problems. However, we want to keep our terminology consistent with the adversarial learning literature}

%For perception-based systems where cameras are used as sensors, the state $x_t$ will not be directly measured and one will have an image $y_t$ taken from the plant at every time step $t$. The mapping from $x_t$ to $y_t$ is fairly complicated, and estimating $x_t$ from $y_t$ may introduce large errors.  However, one can roughly treat $y_t$ or some augmented variant as a state for a new MDP on some lifted space and then apply the above RL methods to directly train a controller mapping from pixel to action, i.e. $u_t=K(y_t)$. Such an end-to-end approach addresses perception and control simultaneously and has shown great promise.

\section{Preliminaries and Problem Statement} \label{section:problem_statement}

%
%In this section, we present the problem statement.

\subsection{Dynamic output feedback with $\mathcal{H}_\infty$ constraints}
We consider a continuous-time linear dynamical system\footnote{All topological results can be extended to the discrete-time domain.} % $G$ governed by the following state-space model:
\begin{equation}\label{eq:Dynamic}
\begin{aligned}
\dot{x}(t) &= Ax(t)+B_1 w(t)+ B_2 u(t), \\
z(t)&= C_1 x(t)+D_{11} w(t)+ D_{12} u(t),\\
y(t) &= C_2 x(t)+D_{21}w(t),
\end{aligned}
\end{equation}
where $x(t) \in \mathbb{R}^{n_x}$ is the state, $u(t)\in \mathbb{R}^{n_u}$ is the control action, $w(t)\in \mathbb{R}^{n_w}$ is the exogenous disturbance, $y(t) \in \mathbb{R}^{n_y}$ is the measured output, and $z(t)\in \mathbb{R}^{n_w}$ is the regulated performance output. 
We make the following assumption.
\begin{assumption} \label{assumption:stabilizability}
%The state-space model \cref{eq:Dynamic} is proper with
%$(A, B_2, C_2)$ being stabilizable and detectable.
The state-space model $(A, B_2, C_2)$ in \cref{eq:Dynamic} is stabilizable and detectable.
\end{assumption}

We aim to design a controller that maps the measured output to the control action, in order to minimize some control performance metric, while satisfying stability and/or robustness constraints.
Such control design problems can be formulated as a constrained policy optimization of the form 
\begin{equation} \label{eq:policy-optimization}
\min_{\mK\in\mathcal{K}} \;\; J(\mK),
\end{equation}
where the decision variable $\mK$ is determined by the policy parameterization,  the objective function $J(\mK)$ measures the closed-loop performance, and the feasible set $\mathcal{K}$ is specified by some stability/robustness requirements. 
We consider the following policy parameterization and robustness constraint:

\begin{itemize}
\item \textit{Decision variable }$\mK$: Output feedback control problems typically require dynamical controllers, and we consider the full-order dynamical controller in the form of:
\begin{align}\label{eq:Dynamic_Controller}
\begin{split}
        \dot \xi(t) &= A_{\mK}\xi(t) + B_{\mK}y(t), \\
        u(t) &= C_{\mK}\xi(t)+ D_{\mK} y(t),
\end{split}
\end{align}
where $\xi(t)$ is the controller state with the same dimension as $x(t)$, and matrices $(A_{\mK}, B_{\mK}, C_{\mK}, D_{\mK})$ specify the controller dynamics.
For convenience, we denote 
\begin{align}
\mK:=\begin{bmatrix}
    D_{\mK} & C_{\mK} \\
    B_{\mK} & A_{\mK}
    \end{bmatrix} \in \mathbb{R}^{(n_u+n_x)\times (n_y+n_x)},
\end{align}
but this matrix $\mK$ should be interpreted as the dynamical controller in \cref{eq:Dynamic_Controller}. 
\item \textit{Feasible region}:  The controller $\mK$ needs to stabilize the closed-loop system and satisfy a robustness constraint that enforces the $\mathcal{H}_\infty$ norm of the transfer function from $w(t)$ to $z(t)$ smaller than a pre-specified level $\gamma$. % The feasible set consists of all such controllers.
\end{itemize}

We allow a general cost function $J(\mK)$, which can be an $\mathcal{H}_2$ performance on some other performance channel, or more general user-specified performance metrics.  
One advantage for the policy optimization formulation \cref{eq:policy-optimization} is that it opens the possibility of solving robust control design via model-free policy search methods. This paper aims to characterize connectivity of $\mathcal{K}$ and strict sublevel sets of~$J(\mK)$. 
%We will elaborate on this point later. 

\subsection{Problem statement}

%\begin{equation} 
%\begin{aligned}
%    \frac{d}{dt} \begin{bmatrix} x \\ \xi \end{bmatrix} &= \begin{bmatrix}
%    A+B_2 D_{\mK} C_2 & B_2 C_{\mK} \\
%    B_{\mK}C_2 & A_{\mK}
%    \end{bmatrix} \begin{bmatrix} x \\ \xi \end{bmatrix}  +  \begin{bmatrix} B_1+B_2 D_{\mK} D_{21} \\ B_{\mK} D_{21} \end{bmatrix}w, \\
%    e& = \begin{bmatrix} C_1+D_{12}D_{\mK} C_2  &  D_{12} C_{\mK} \end{bmatrix} \begin{bmatrix} x \\ \xi \end{bmatrix} +   (D_{11}+D_{12}D_{\mK} D_{21}) w.
%\end{aligned}
%\end{equation}

%We discuss several equivalent characterizations for the feasible set $\mathcal{K}$.
We denote the state of the closed-loop system as $\zeta=\bmat{x^\tr & \xi^\tr}^\tr$ after combining~\cref{eq:Dynamic_Controller} with~\cref{eq:Dynamic}. It is not difficult to derive the closed-loop system
\begin{align}\label{eq:closed-loop_LQG}
\begin{split}
\dot{\zeta}(t)&=A_{\mathrm{cl}} \zeta(t)+ B_{\mathrm{cl}} w(t),\\
w(t)&=C_{\mathrm{cl}} \zeta(t)+ D_{\mathrm{cl}} w(t),
\end{split}
\end{align}
where the matrices $(A_{\mathrm{cl}},B_{\mathrm{cl}},C_{\mathrm{cl}},D_{\mathrm{cl}})$ are given by
\begin{align}\label{eq:closeABCD}
\begin{split}
A_{\mathrm{cl}}&:=\begin{bmatrix}
    A+B_2 D_{\mK} C_2  & B_2 C_{\mK} \\
    B_{\mK}C_2 & A_{\mK}
\end{bmatrix},\\
B_{\mathrm{cl}}&:=\begin{bmatrix} B_1+B_2 D_{\mK} D_{21} \\ B_{\mK} D_{21} \end{bmatrix},\\
C_{\mathrm{cl}}&:=\begin{bmatrix} C_1+D_{12}D_{\mK} C_2  &  D_{12} C_{\mK} \end{bmatrix},\\
 D_{\mathrm{cl}}&:=D_{11}+D_{12}D_{\mK} D_{21}.
\end{split}
\end{align}
The closed-loop system  is internally stable  if and only if $A_{\mathrm{cl}}$ is~Hurwitz~\cite{zhou1996robust}.
The set of full-order stabilizing dynamical controllers is thus defined as
\begin{equation} \label{eq:internallystabilizing}
    \mathcal{C}_{\mathrm{stab}} := \left\{
    \left.\mK
    \in \mathbb{R}^{(n_u+n_x) \times (n_y+n_x)} \right|\;  A_{\mathrm{cl}}~\text{is Hurwitz}\right\}.
\end{equation}

The transfer function from $w(t)$ to $z(t)$ is 
\begin{equation} \label{eq:hinf-transfer-function}
    \mathbf{T}_{zw}(s) = C_{\mathrm{cl}}(sI-A_{\mathrm{cl}})^{-1} B_{\mathrm{cl}}+D_{\mathrm{cl}}. 
\end{equation}
Then, the feasible set  is formally specified as
\begin{equation} \label{eq:Hinfset}
    \mathcal{K}_{\gamma} := \left\{
    \left.\mK
    \in \mathcal{C}_{\mathrm{stab}}\right|\;   \norm{\mathbf{T}_{zw}}_\infty< \gamma\right\},
\end{equation}
% \begin{equation} \label{eq:Hinfset}
%     \mathcal{K}_{\gamma} := \left\{
%     \left.\mK
%     \in \mathcal{C}_s\right|\;   \norm{C_{\mathrm{cl}}(sI-A_{\mathrm{cl}})^{-1} B_{\mathrm{cl}}+D_{\mathrm{cl}}}_\infty< \gamma\right\}.
% \end{equation}
%
where $\|\mathbf{T}_{zw}\|_{\infty}$ denotes the $\mathcal{H}_{\infty}$ norm of $\mathbf{T}_{zw}$, and can be calculated as  $\|\mathbf{T}_{zw}\|_{\infty}:= \sup_{\omega} {\sigma}_{\max} (\mathbf{T}_{zw}(j\omega))$, with ${\sigma}_{\max}(\cdot)$ denoting the maximum singular value. 
In \cref{eq:Hinfset}, we explicitly highlight the robustness level $\gamma$ via the subscript.
Under~\cref{assumption:stabilizability}, there exists a finite positive value 
$$
\gamma^\star : = \inf_{\mK} \;\; \norm{\mathbf{T}_{zw}}_\infty.
$$ 
Then, $\mathcal{K}_\gamma$ is non-empty if and only if $\gamma>\gamma^\star$.
Obviously, we have $\mathcal{K}_{\gamma_0}\subset \lim_{\gamma\rightarrow \infty} \mathcal{K}_\gamma=\mathcal{C}_{\mathrm{stab}}$ for any positive $\gamma_0$. 
%The set $\mathcal{K}_\gamma$ is open when it is nonempty.

In \cref{eq:policy-optimization}, it is possible to estimate the gradient of $J(\mK)$ and 
%the closed-loop $\mathcal{H}_\infty$ norm $\norm{C_{\mathrm{cl}}(sI-A_{\mathrm{cl}})^{-1} B_{\mathrm{cl}}+D_{\mathrm{cl}}}_\infty$ 
$\norm{\mathbf{T}_{zw}}_\infty$
from sampled system trajectories, and one may apply model-free gradient-based barrier algorithms to find a solution in an iterative fashion. To understand the performance of such model-free policy search algorithms, we need to characterize the optimization landscape of \cref{eq:policy-optimization}. In particular, we focus on some geometrical properties of the feasible region $\mathcal{K}_{\gamma}$ and strict sublevel sets of $J(\mK)$.  
It~is well-known that $\mathcal{K}_{\gamma}$ is in general non-convex, but little is known about their other geometrical properties. 
Even for the case $\gamma \to \infty$, only a very recent work shows that 
%
%Connectivity of stabilizing controllers has been well understood~\cite{feng2019exponential,bu2019topological,zheng2021analysis}. 
%Importantly, it has been shown that the set 
$\mathcal{C}_{\mathrm{stab}}$ has at
most two path-connected components that are identical up to similarity transformations~\cite[Theorems 3.1 \& 3.2]{zheng2021analysis}. %, bringing positive news for gradient-based local policy search. 

In many cases, it is desirable to explicitly encode some robustness guarantee for the feasible region~\cite{whittle1990risk,Dullerud99,doyle1978guaranteed}. However, the connectivity of the $\mathcal{H}_\infty$-constrained set $\mathcal{K}_{\gamma}$ remains unknown. In this paper, we focus on topological properties of $\mathcal{K}_\gamma$ and their implications to gradient-based policy search.
We will show that $\mathcal{K}_\gamma$ shares similar properties with $\mathcal{C}_{\mathrm{stab}}$.

\begin{remark}
The dynamical controller \cref{eq:Dynamic_Controller} is proper. Depending on the cost function $J(\mK)$ (e.g., LQG~\cite{zhou1996robust}), we may want to confine the policy space to strictly proper dynamical controllers. Then the feasible set is defined as
\begin{equation} \label{eq:Hinfset1}
    \tilde{\mathcal{K}}_{\gamma} := \left\{
    \left.\mK
    \in \mathcal{K}_\gamma\right|\;   D_{\mK}=0\right\}.
\end{equation}
Our analysis technique works for both $\tilde{\mathcal{K}}_\gamma$ and $\mathcal{K}_\gamma$, and we show that $\tilde{\mathcal{K}}_\gamma$ and $\mathcal{K}_\gamma$ have similar topological properties. \hfill $\square$
\end{remark}

\section{Path-connectivity of ${\mathcal{K}}_{\gamma}$}
\label{sec:connectivity}

In this section, we present our main results on the topological properties of $\mathcal{K}_{\gamma}$. 
We first have a simple observation.
\begin{lemma} \label{lemma:unbounded_proper}
    Let $\gamma>\gamma^\star$. The set ${\mathcal{K}}_{\gamma}$ is non-empty, open,  unbounded and non-convex.
\end{lemma}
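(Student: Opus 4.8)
The plan is to verify each of the four claimed properties in turn, using only elementary arguments and the standing assumptions. \emph{Non-emptiness} is immediate from the discussion preceding the lemma: since $\gamma>\gamma^\star=\inf_{\mK}\norm{\mathbf{T}_{zw}}_\infty$, there exists some controller $\mK$ achieving $\norm{\mathbf{T}_{zw}}_\infty<\gamma$, and any such $\mK$ necessarily stabilizes the closed loop (a finite $\mathcal{H}_\infty$ norm requires $A_{\mathrm{cl}}$ Hurwitz), so $\mK\in\mathcal{K}_\gamma$. \emph{Openness} is the key structural point: I would argue that the map $\mK\mapsto(A_{\mathrm{cl}},B_{\mathrm{cl}},C_{\mathrm{cl}},D_{\mathrm{cl}})$ is continuous (indeed polynomial) in the entries of $\mK$ by \cref{eq:closeABCD}, that the set of Hurwitz matrices is open, and that on $\mathcal{C}_{\mathrm{stab}}$ the function $\mK\mapsto\norm{\mathbf{T}_{zw}}_\infty$ is continuous (the $\mathcal{H}_\infty$ norm depends continuously on the state-space data as long as $A_{\mathrm{cl}}$ stays Hurwitz). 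Hence $\mathcal{K}_\gamma$ is the intersection of the open set $\mathcal{C}_{\mathrm{stab}}$ with the preimage of the open interval $(-\infty,\gamma)$ under a continuous function, so it is open.

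For \emph{unboundedness}, I would fix one feasible controller $\mK_0\in\mathcal{K}_\gamma$ and exhibit an unbounded family of feasible controllers obtained from it. The natural device is the similarity transformation that features in the paper's main theorem: replacing $(A_{\mK},B_{\mK},C_{\mK},D_{\mK})$ by $(T A_{\mK} T^{-1}, T B_{\mK}, C_{\mK} T^{-1}, D_{\mK})$ for invertible $T$ leaves the controller transfer function, hence $A_{\mathrm{cl}}$'s spectrum and $\mathbf{T}_{zw}$, unchanged; letting $T=tI$ with $t\to\infty$ (when $B_{\mK}\ne0$ or $C_{\mK}\ne0$, which one can always arrange for a generic feasible point, or by perturbing within the open set $\mathcal{K}_\gamma$) produces controllers whose norm grows without bound while staying in $\mathcal{K}_\gamma$. \emph{Non-convexity} I would establish by a small explicit counterexample, or by invoking the well-known fact that even $\mathcal{C}_{\mathrm{stab}}$ (the $\gamma\to\infty$ limit) is non-convex; since $\mathcal{K}_\gamma$ reduces to a scenario with the same obstruction, one can pick two feasible controllers whose average destabilizes the closed loop, e.g.\ two stabilizing controllers $\mK_1$ and $-\mK_1$-type pairs whose midpoint is not stabilizing.

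The main obstacle is the unboundedness argument: I must be careful that the similarity-scaling trick genuinely escapes to infinity, which fails precisely at controllers with $B_{\mK}=0$ and $C_{\mK}=0$ (pure static feedback embedded trivially). I would handle this by first noting that $\mathcal{K}_\gamma$ is open and nonempty, so it contains points arbitrarily close to any given feasible controller; perturbing $\mK_0$ slightly to make $B_{\mK}\neq0$ keeps it feasible by openness, and then the scaling family $T=tI$ does the job. An alternative, perhaps cleaner, route to unboundedness is to directly show that adding a large, fast stable mode to the controller that is weakly coupled to the plant changes neither stability nor the $\mathcal{H}_\infty$ norm appreciably while driving $\norm{\mK}\to\infty$; I would present whichever of these is shortest. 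The other three properties are routine once the continuity facts are stated.
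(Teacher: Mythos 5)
Your proposal is correct and follows essentially the same route as the paper: openness from continuity of the $\mathcal{H}_\infty$ norm, unboundedness from the invariance of $\norm{\mathbf{T}_{zw}}_\infty$ under (unbounded) similarity transformations of the controller, and non-convexity via an explicit two-controller counterexample, exactly as in \Cref{example1}. Your extra care with the degenerate case $B_{\mK}=0$, $C_{\mK}=0$ (handled by perturbing within the open set before scaling) is a detail the paper glosses over, but it does not change the argument.
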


This fact is well-known. Then openness of ${\mathcal{K}}_{\gamma}$ follows from the continuity of the $\mathcal{H}_\infty$ norm. It is unbounded since $\mathcal{H}_\infty$ norm is invariant under similarity transformations that are unbounded in the state-space domain. The non-convexity is also known, and we illustrate it using the example below. 

\begin{example}\label{example1}
Consider an open-loop unstable dynamical system \cref{eq:Dynamic} with
$A=B_1=B_2=C_1=C_2=D_{21}=D_{12}=1$, and  $D_{11}=0
$.
It is easy to verify that the following dynamical controllers
$$
\mK^{(1)}=\bmat{0 & 2\\ -2 & -2}, \,\mK^{(2)}=\bmat{0 & -2\\ 2 & -2}
$$
satisfy $\norm{C_{\mathrm{cl}}(sI-A_{\mathrm{cl}})^{-1} B_{\mathrm{cl}}+D_{\mathrm{cl}}}_\infty < 3.33$, and thus 
we have $\mK^{(1)} \in \mathcal{K}_{3.33}, \mK^{(2)} \in \mathcal{K}_{3.33}$. However, 
$$
\frac{1}{2}(\mK^{(1)}+\mK^{(2)}) = \bmat{0 & 0 \\ 0  & -2},
$$
fails to stabilize the system, and thus is outside $\mathcal{K}_{3.33}$.
\hfill $\square$
\end{example}

Despite the non-convexity, $\mathcal{K}_\gamma$ has some nice connectivity property which will be established next.

%We are now ready to present our main technical result.

\subsection{Main results}
Our first main technical result is stated as follows.

\begin{theorem} \label{Theo:disconnectivity}
Given any $\gamma > \gamma^\star$, the set ${\mathcal{K}}_{\gamma}$ has at most two path-connected components.  
\end{theorem}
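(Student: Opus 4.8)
The plan is to mirror the strategy used for $\mathcal{C}_{\mathrm{stab}}$ in~\cite{zheng2021analysis}, but carried out through the Linear Matrix Inequality (LMI) characterization of the $\mathcal{H}_\infty$ constraint rather than the pure stability characterization. The first step is to recall the classical change of variables for $\mathcal{H}_\infty$ synthesis~\cite{gahinet1994linear,scherer1997multiobjective}: a controller $\mK\in\mathcal{K}_\gamma$ exists if and only if there are symmetric matrices $X,Y\in\mathbb{S}^{n_x}$ and transformed controller variables $(\hat A,\hat B,\hat C,\hat D)$ such that a certain matrix inequality $\mathcal{L}(X,Y,\hat A,\hat B,\hat C,\hat D)\prec 0$ holds together with the coupling condition $\begin{bmatrix} X & I \\ I & Y\end{bmatrix}\succ 0$. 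Crucially, this inequality is \emph{convex} (indeed jointly an LMI) in the new variables, and the recovery of the physical controller $\mK$ from $(X,Y,\hat A,\hat B,\hat C,\hat D)$ requires a factorization $I-XY = MN^\tr$ with $M,N$ square invertible — and this factorization is exactly where a discrete sign/orientation ambiguity enters.

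The second step is to define an enlarged set $\mathcal{G}$ living in the space of tuples $(X,Y,\hat A,\hat B,\hat C,\hat D,M,N)$ (with $M,N$ invertible and $MN^\tr = I-XY$), equipped with the map $\Phi:\mathcal{G}\to\mathcal{K}_\gamma$ that recovers the physical controller. I would show $\Phi$ is surjective onto $\mathcal{K}_\gamma$ (every feasible controller arises from some admissible tuple, by running the change of variables backwards and choosing any valid factorization) and continuous. Then I would argue that the "data part" of $\mathcal{G}$ — the projection onto $(X,Y,\hat A,\hat B,\hat C,\hat D)$ — has \emph{convex} image: it is cut out by the LMI $\mathcal{L}\prec 0$ and the LMI $\begin{bmatrix}X & I\\ I & Y\end{bmatrix}\succ 0$, hence is convex and in particular path-connected. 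The set of admissible $(M,N)$ over a fixed interior point $(X,Y)$ with $I-XY$ invertible is $\{(M,N): MN^\tr = I-XY,\ \det M\neq 0\}$; writing $N^\tr = M^{-1}(I-XY)$ this is parametrized by $M\in GL_{n_x}(\mathbb{R})$, which has exactly two connected components (distinguished by $\operatorname{sign}\det M$).

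The third step assembles these facts: $\mathcal{G}$ fibers over a convex (path-connected) base with fiber $GL_{n_x}(\mathbb{R})$, so $\mathcal{G}$ has at most two path-connected components, indexed by $\operatorname{sign}\det M$. Since $\Phi$ is continuous and surjective, $\mathcal{K}_\gamma = \Phi(\mathcal{G})$ is a continuous image of a space with at most two path-connected components, hence $\mathcal{K}_\gamma$ itself has at most two path-connected components. (I'd also need to check $I-XY$ is genuinely invertible on the relevant set so that $M,N$ can be taken square and invertible — this follows from $\begin{bmatrix}X & I\\ I & Y\end{bmatrix}\succ 0$ forcing $X\succ Y^{-1}$, hence $XY - I \succ 0$ after a congruence, so $\det(I-XY)\neq 0$; care is needed because $I-XY$ need not be symmetric, but its eigenvalues coincide with those of the symmetric $Y^{1/2}XY^{1/2}-I\succ 0$.)

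The main obstacle I anticipate is not the topological bookkeeping but verifying that the change of variables behaves well \emph{globally} on the open feasible set: specifically, showing that $\Phi$ is well-defined and continuous for \emph{every} admissible tuple (not just near a nominal design point), that surjectivity holds with $M,N$ taken to be square and invertible (the standard synthesis literature often only needs one direction and one choice of factorization), and handling the possibility that a path in the convex base passes through configurations where the natural factorization degenerates — which is why it is cleaner to carry $(M,N)$ as independent coordinates subject to $MN^\tr = I-XY$ rather than to pick them as a function of $(X,Y)$. I expect the connectivity count of the fiber to reduce cleanly to $\pi_0(GL_{n_x}(\mathbb{R})) = \mathbb{Z}/2$, exactly paralleling the similarity-transformation dichotomy in~\cite{zheng2021analysis}.
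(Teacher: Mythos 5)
Your proposal is correct and follows essentially the same route as the paper: the Scherer--Gahinet change of variables yields a convex LMI set, you augment it with an invertible factorization of $I-XY$ (the paper's $\Xi\Pi = I-YX$ with the pair $(\Pi,\Xi)$ carried as extra coordinates, exactly as you suggest), and you push the at-most-two components of this augmented set, indexed by the sign of the determinant of the factor, through a continuous surjection onto $\mathcal{K}_\gamma$. The surjectivity step you flag as the main obstacle is handled in the paper precisely as you anticipate, by running the change of variables backwards from a bounded-real-lemma certificate $P$ (with a small perturbation to make the off-diagonal block of $P$ invertible).
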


Before presenting a formal proof for \Cref{Theo:disconnectivity},  we first give some high-level ideas. Based on the bounded real lemma \cite{Dullerud99}, 
we have $\mK\in \mathcal{K}_\gamma$ if and only if the matrix inequality,
\begin{align}
\label{eq:LMI1}
 \bmat{A_{\mathrm{cl}}^\tr P+P A_{\mathrm{cl}} & P B_{\mathrm{cl}} & C_{\mathrm{cl}}^\tr\\ B_{\mathrm{cl}}^\tr P & -\gamma I & D_{\mathrm{cl}}^\tr\\ C_{\mathrm{cl}} & D_{\mathrm{cl}} & -\gamma I}    \prec 0, \,\, P\succ 0,
\end{align}
is feasible. 
Clearly, the condition \cref{eq:LMI1} is not convex in $\mK$ and $P$.
Our result in \Cref{Theo:disconnectivity} relies on the fact that \cref{eq:LMI1} can be convexified into a linear matrix inequality (LMI) (that is convex and hence path-connected), using a non-trivial but known change of variables for $\mathcal{H}_\infty$ control~\cite{gahinet1994linear,scherer1997multiobjective}. The only potential of disconnectivity comes from the fact that the set of invertible matrices corresponding to similarity transformations has two path-connected components. Our proof is inspired by the recent work~\cite{zheng2021analysis} that characterizes $\mathcal{C}_{\mathrm{stab}}$ only, with the main difference being that we need to analyze a more complicated  $\mathcal{H}_\infty$ constraint \cref{eq:LMI1}.   

We now illustrate this idea for the case of  state feedback (i.e. $y(t)=x(t)$ and $u(t)=Kx(t)$ with $K \in \mathbb{R}^{n_u \times n_x}$). In this case,  it is known that  \cref{eq:LMI1} is feasible\footnote{In the state-feedback case,  $(A_{\mathrm{cl}}, B_{\mathrm{cl}}, C_{\mathrm{cl}}, D_{\mathrm{cl}})$ should be calculated from some formulas which are different from \cref{eq:closeABCD}. We omit the details.} if and only if 
\begin{align}\label{eq:lmi2}
M_\gamma(Q,L)\prec 0, \,\, Q\succ 0
\end{align}
 is feasible, where $M_\gamma(Q,L)$ is defined as 
\begin{align*}
\bmat{QA^\tr+AQ+L^\tr B_2^\tr+B_2 L  & B_1 & (C_1 Q+D_{12} L)^\tr\\ B_1^\tr & -\gamma I & 0\\C_1 Q+ D_{12} L & 0 & -\gamma I}.
\end{align*}
Using a simple change of variables $K =LQ^{-1}$, we have
     $$
     \begin{aligned}
     &\{K\in \mathbb{R}^{n_x \times n_u} \mid  \text{\cref{eq:LMI1} is feasible}\} \\
        \; \Longleftrightarrow \;\; &\{K = LQ^{-1} \in \mathbb{R}^{n_x \times n_u} \mid \text{\cref{eq:lmi2} is satisfied}\}.
     \end{aligned}
        $$
Since the set of $(Q,L)$ satisfying LMI \cref{eq:lmi2} is convex and the map 
    %\begin{equation}\label{eq:change_of_variables_state}
     $   K = LQ^{-1}$  
    %\end{equation}
    is continuous, the set $\{K\in \mathbb{R}^{n_x \times n_u} \mid \text{\cref{eq:LMI1} is feasible}\}$ is path-connected.

   The  analysis above hinges upon the fact that in the~state-feedback case, the non-convex condition \cref{eq:LMI1} can be convexified  using  the simple change of variables
     $K = LQ^{-1}$.    
In the output feedback case,  a similar condition can be derived using a more complicated change of variables in~\cite{scherer1997multiobjective}.  We will leverage this fact to prove 
 \cref{Theo:disconnectivity}. 
Specifically, it is known that
a controller $\mK\in \mathcal{K}_\gamma$ can be constructed from the solution of the following LMI condition:
\begin{align}\label{eq:lmi3}
\begin{bmatrix}
X & I \\ I & Y
\end{bmatrix} \! \succ 0,\ 
\mM_\gamma(X,Y,\hat{\mathbf{A}},\hat{\mathbf{B}},\hat{\mathbf{C}},\hat{\mathbf{D}})
\!\! \prec 0,
\end{align}
where $ X\in\mathbb{S}^{n_x}, Y\in\mathbb{S}^{n_x},
\hat{\mathbf{A}}\in\mathbb{R}^{n_x \times n_x}, \hat{\mathbf{B}}\in \mathbb{R}^{n_x \times n_y}, \hat{\mathbf{C}} \in \mathbb{R}^{n_u \times n_x}$, and $\hat{\mathbf{D}}\in \mathbb{R}^{n_u\times n_y},$ are decision variables. The linear mapping $\mM_\gamma(X,Y,\hat{\mathbf{A}},\hat{\mathbf{B}},\hat{\mathbf{C}},\hat{\mathbf{D}})$ is defined as
    \begingroup
    \setlength\arraycolsep{1.5pt}
\def\arraystretch{1.2}
\begin{align}
\mM_\gamma(X,Y,\hat{\mathbf{A}},\hat{\mathbf{B}},\hat{\mathbf{C}},\hat{\mathbf{D}})\!=\!\bmat{\mM_{11} & \mM_{12} & \mM_{13} & \mM_{14}\\ \mM_{12}^\tr & \mM_{22} & \mM_{23} & \mM_{24}\\ \mM_{13}^\tr & \mM_{23}^\tr & \mM_{33} & \mM_{34}\\ \mM_{14}^\tr & \mM_{24}^\tr & \mM_{34}^\tr & \mM_{44}},
\end{align}
\endgroup
where the blocks $\mM_{ij}$ are given by
\begin{align}
\begin{split}
\mM_{11}&=AX+XA^\tr+B_2 \hat{\mathbf{C}}+(B_2 \hat{\mathbf{C}})^\tr,\\
\mM_{12}&=\hat{\mathbf{A}}^\tr+(A+B_2 \hat{\mathbf{D}} C_2), \\
\mM_{13}&= B_1+B_2 \hat{\mathbf{D}} D_{21},\\
\mM_{14}&=(C_1 X+D_{12} \hat{\mathbf{C}})^\tr,\\
\mM_{22}&=A^\tr Y + YA+\hat{\mathbf{B}} C_2+(\hat{\mathbf{B}} C_2)^\tr, \\
\mM_{23}&=YB_1+\hat{\mathbf{B}} D_{21},\\
\mM_{24}&=(C_1+D_{12} \hat{\mathbf{D}} C_2)^\tr,\\
\mM_{33}&=-\gamma I,\\
\mM_{34}&=(D_{11}+D_{12} \hat{\mathbf{D}} D_{21})^\tr, \\
\mM_{44}&=-\gamma I.
\end{split}
\end{align}

%
%\begin{align}\label{eq:Ldef}
%\begin{split}
%&L_\gamma(X,Y,M,G,H,F):=\\
%&\bmat{AX+XA^\tr+B_2 F+(B_2 F)^\tr & M^\tr+(A+B_2 G C_2) & B_1+B_2 G D_{21} & (C_1 X+D_{12} F)^\tr\\  M+(A+B_2 G C_2)^\tr &  A^\tr Y + YA+H C_2+(H C_2)^\tr & YB_1+H D_{21} & (C+D_{12} G C_2)^\tr\\ (B_1+B_2 G D_{21})^\tr & (YB_1+H D_{21})^\tr & -\gamma I & (D_{11}+D_{12} G D_{21})^\tr \\ C_1 X+D_{12} F & C+D_{12} G C_2 & D_{11}+D_{12} G D_{21} & -\gamma I}
%\end{split}
%\end{align}
Based on LMI \cref{eq:lmi3}, we  introduce two useful sets:
\begin{align} \label{eq:SetF}
\mathcal{F}_\gamma:=
\bigg\{&
(X,Y,\hat{\mathbf{A}},\hat{\mathbf{B}},\hat{\mathbf{C}},\hat{\mathbf{D}})
\mid  \text{\cref{eq:lmi3} is satisfied}
\bigg\}, \\
\begin{split}
\mathcal{G}_\gamma:=
\bigg\{&(X,Y,\hat{\mathbf{A}},\hat{\mathbf{B}},\hat{\mathbf{C}},\hat{\mathbf{D}},\Pi,\Xi)
\mid \Pi,\Xi\in\mathbb{R}^{n_x \times n_x},\, \\&(X,Y,\hat{\mathbf{A}},\hat{\mathbf{B}},\hat{\mathbf{C}},\hat{\mathbf{D}})\in\mathcal{F}_{\gamma},\, \Xi\Pi =I-YX\bigg\}.
\end{split}
\end{align}
It is obvious that $\mathcal{F}_\gamma$ is convex and hence path-connected. Together with the fact that the set of $n_x \times n_x$ invertible matrices has two path-connected components, this guarantees that $\mathcal{G}_\gamma$ has exactly two path-connected components.
We shall see that there exists a continuous surjective map from $\mathcal{G}_{\gamma}$ to $\mathcal{K}_\gamma$, and thus $\mathcal{K}_\gamma$ has at most two path-connected components. A detailed proof is provided in the next subsection.

\subsection{Detailed proof of \Cref{Theo:disconnectivity}} \label{subsection:proof-theorem-1}

%Before proceeding, we note the following useful lemma for the elements in $\mathcal{G}_{\gamma}$.

\begin{lemma}\label{lemma:connectivity_preliminary}
For any $(X,Y,\hat{\mathbf{A}},\hat{\mathbf{B}},\hat{\mathbf{C}},\hat{\mathbf{D}},\Pi,\Xi)\in\mathcal{G}_{\gamma}$, $\Pi$ and $\Xi$ are always invertible, and consequently, the block triangular matrices
$
\begin{bmatrix} I & 0 \\ YB_2 & \Xi
\end{bmatrix}$ and $\begin{bmatrix} I & C_2 X\\
0 & \Pi  \end{bmatrix}
$
are invertible.
\end{lemma}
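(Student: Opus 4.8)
The plan is to show that $\Pi$ and $\Xi$ are invertible by exploiting the defining constraint $\Xi\Pi = I - YX$ together with the positive-definiteness block constraint $\bmat{X & I \\ I & Y}\succ 0$ from \cref{eq:lmi3}. First I would observe that $\bmat{X & I \\ I & Y}\succ 0$ implies $X\succ 0$, $Y\succ 0$, and (by a Schur complement) $Y - X^{-1}\succ 0$, equivalently $XY - I \succ 0$ in the sense that $X(Y-X^{-1})$ has the same inertia as the positive-definite matrix $X^{1/2}(Y-X^{-1})X^{1/2}$; in particular $I - YX$ is similar to $X^{1/2}(X^{-1}-Y)X^{1/2}\prec 0$, hence $I-YX$ is nonsingular. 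Since $\Xi\Pi = I-YX$ is a product of two square matrices equal to a nonsingular matrix, both $\Xi$ and $\Pi$ must be invertible (a square matrix that has a right or left inverse is invertible).

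Next I would deduce invertibility of the two block-triangular matrices. For $\bmat{I & 0 \\ YB_2 & \Xi}$, since it is block lower-triangular with identity and $\Xi$ on the diagonal, its determinant is $\det(I)\det(\Xi) = \det(\Xi)\neq 0$, so it is invertible. Similarly $\bmat{I & C_2 X \\ 0 & \Pi}$ is block upper-triangular with determinant $\det(\Pi)\neq 0$, hence invertible. This part is a one-line determinant computation once the invertibility of $\Pi,\Xi$ is in hand.

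The main obstacle — though it is mild — is making the inertia/nonsingularity argument for $I - YX$ fully rigorous, since $YX$ need not be symmetric. The cleanest route is to note $I-YX = Y(Y^{-1}-X) = -Y(X - Y^{-1})$, and $X - Y^{-1}\succ 0$ is exactly the Schur complement condition equivalent to $\bmat{X & I \\ I & Y}\succ 0$ (using $Y\succ 0$). Then $I-YX$ is a product of the invertible matrix $-Y$ with the invertible (positive-definite) matrix $X - Y^{-1}$, so it is invertible, and no appeal to inertia of a non-symmetric product is actually needed. With $I-YX$ invertible, the factorization $\Xi\Pi = I-YX$ forces $\det\Xi\cdot\det\Pi = \det(I-YX)\neq 0$, giving $\det\Xi\neq 0$ and $\det\Pi\neq 0$ simultaneously, which completes the proof.
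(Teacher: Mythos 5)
Your proposal is correct and follows essentially the same route as the paper, which simply notes that $\det(\Pi\Xi)=\det(YX-I)\neq 0$; your factorization $I-YX=-Y(X-Y^{-1})$ with the Schur complement $X-Y^{-1}\succ 0$ just makes explicit the nonsingularity of $I-YX$ that the paper leaves implicit, and the block-triangular determinant step matches as well.
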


\vspace{1mm}

The proof is straightforward by observing that $\det(\Pi\Xi) = \det(YX-I) \neq 0$ for any $(X,Y,\hat{\mathbf{A}},\hat{\mathbf{B}},\hat{\mathbf{C}},\hat{\mathbf{D}},\Pi,\Xi)\in\mathcal{G}_{\gamma}$.
%
% \begin{proof}
% For all $ (X,Y,\hat{\mathbf{A}},\hat{\mathbf{B}},\hat{\mathbf{C}},\hat{\mathbf{D}},\Pi,\Xi)\in\mathcal{G}_{\gamma}$, we have 
% \begin{align*} 
% \det(YX-I)
% =\det X\det(Y-X^{-1})
% =\det\begin{bmatrix}
% X & I \\ I & Y
% \end{bmatrix}>0.
% \end{align*}
% Thus, $\text{det}(\Pi) \neq 0$ and $\text{det}(\Xi) \neq 0$, indicating they are both invertible. The rest of the proof is straightforward.
% \end{proof}
%
%
Based on the change of variables in \cite{scherer1997multiobjective}, we can map each element of $\mathcal{G}_{\gamma}$ back to a controller $\mK\in\mathbb{R}^{(n_u+n_x)\times(n_y+n_x)}$. % that is essential to our proof. 
    For each $\mZ=(X,Y,\hat{\mathbf{A}},\hat{\mathbf{B}},\hat{\mathbf{C}},\hat{\mathbf{D}},\Pi,\Xi)$ in $\mathcal{G}_{\gamma}$, we define
        \begingroup
    \setlength\arraycolsep{1.5pt}
\def\arraystretch{1.2}
    \begin{align} \label{eq:change_of_variables_output}
\begin{split}
        \Phi(\mZ)&= 
        \begin{bmatrix}
        \Phi_D(\mZ) & \Phi_C(\mZ) \\
        \Phi_B(\mZ) & \Phi_A(\mZ)
        \end{bmatrix}\\
        &=
        \begin{bmatrix} I & 0 \\ YB_2 & \Xi
        \end{bmatrix}^{-1} \begin{bmatrix}
        \hat{\mathbf{D}} & \hat{\mathbf{C}} \\
        \hat{\mathbf{B}} & \hat{\mathbf{A}}-YAX
        \end{bmatrix}\begin{bmatrix} I & C_2 X\\
        0 & \Pi  \end{bmatrix}^{-1}.
\end{split}
    \end{align}
\endgroup

We now present the following result which is essential for the proof of  \cref{Theo:disconnectivity}. 
\begin{proposition}\label{proposition:Phi_surjective}
The mapping $\Phi$ in~\cref{eq:change_of_variables_output} is a continuous and surjective mapping from $\mathcal{G}_{\gamma}$ to $\mathcal{K}_\gamma$.
\end{proposition}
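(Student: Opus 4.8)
The plan is to prove \Cref{proposition:Phi_surjective} in two parts: first that $\Phi$ is well-defined and continuous on $\mathcal{G}_\gamma$ with image contained in $\mathcal{K}_\gamma$, and second that $\Phi$ is surjective onto $\mathcal{K}_\gamma$. Continuity is immediate: by \Cref{lemma:connectivity_preliminary} the two block-triangular factors $\bmat{I & 0\\ YB_2 & \Xi}$ and $\bmat{I & C_2X\\ 0 & \Pi}$ are invertible for every point of $\mathcal{G}_\gamma$, so $\Phi$ is a composition of matrix multiplication and matrix inversion, all of which are continuous on the (open) locus where the relevant determinants are nonzero. So the real content is the two inclusions.

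For the inclusion $\Phi(\mathcal{G}_\gamma)\subseteq \mathcal{K}_\gamma$, I would invoke the classical reconstruction identity from the $\mathcal{H}_\infty$ change of variables \cite{gahinet1994linear,scherer1997multiobjective}. The key algebraic fact is the following: given $\mZ=(X,Y,\hat{\mathbf{A}},\hat{\mathbf{B}},\hat{\mathbf{C}},\hat{\mathbf{D}},\Pi,\Xi)\in\mathcal{G}_\gamma$, set $\mK=\Phi(\mZ)$ and build the closed-loop matrices $(A_{\mathrm{cl}},B_{\mathrm{cl}},C_{\mathrm{cl}},D_{\mathrm{cl}})$ from \cref{eq:closeABCD}. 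Then there is an explicit invertible matrix — built from $X,Y,\Pi,\Xi$, of the block form $T=\bmat{Y & I\\ \Pi^\tr & 0}$ times a suitable partner, or more precisely the congruence transformation used by Scherer — and an associated $P=T^\tr\bmat{Y & I\\ \Pi^\tr & 0}$-type Lyapunov certificate $P\succ0$, such that the congruence of the bounded-real LMI block matrix in \cref{eq:LMI1} by $\mathrm{diag}(T, I, I)$ equals exactly $\mM_\gamma(X,Y,\hat{\mathbf{A}},\hat{\mathbf{B}},\hat{\mathbf{C}},\hat{\mathbf{D}})$ (after reordering rows/columns). I would state this as a lemma (or cite it as a known identity), verify that the change of variables \cref{eq:change_of_variables_output} is precisely the inverse of Scherer's substitution, and then conclude: since $\mZ\in\mathcal{G}_\gamma$ forces $\bmat{X & I\\ I & Y}\succ0$ and $\mM_\gamma(\cdot)\prec0$, the congruence gives a valid $P\succ0$ solving \cref{eq:LMI1}, hence by the bounded real lemma $\mK\in\mathcal{K}_\gamma$. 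A subtlety here is that one must check $P\succ0$ genuinely follows from $\bmat{X & I\\ I & Y}\succ0$ together with invertibility of $\Pi,\Xi$ and the constraint $\Xi\Pi=I-YX$; this is the standard Schur-complement argument showing the reconstructed $P$ is a congruence of $\bmat{X & I\\ I & Y}$.

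For surjectivity, I would take an arbitrary $\mK\in\mathcal{K}_\gamma$ and construct a preimage. By the bounded real lemma there is $P\succ0$ solving \cref{eq:LMI1} for this $\mK$. Partition $P=\bmat{X & U\\ U^\tr & \ast}$ and $P^{-1}=\bmat{Y & V\\ V^\tr & \ast}$ conformably with the $(n_x,n_x)$ split; note $X\succ0$, $Y\succ0$, and the identity $XY+UV^\tr=I$, i.e. $UV^\tr=I-XY$ — here I must be slightly careful to align this with the $\Xi\Pi=I-YX$ convention in $\mathcal{G}_\gamma$, choosing $\Pi,\Xi$ (essentially $V^\tr$ and $U$, possibly transposed) so that the constraint holds. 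Then define $\hat{\mathbf{A}},\hat{\mathbf{B}},\hat{\mathbf{C}},\hat{\mathbf{D}}$ by applying Scherer's forward substitution to $(A_{\mK},B_{\mK},C_{\mK},D_{\mK})$ — i.e. invert \cref{eq:change_of_variables_output}. One then checks (i) $\bmat{X & I\\ I & Y}\succ0$ follows from $P\succ0$ via the nonsingular congruence $\bmat{I & 0\\ Y & V}$ applied to $P$ (standard), (ii) $\mM_\gamma(X,Y,\hat{\mathbf{A}},\hat{\mathbf{B}},\hat{\mathbf{C}},\hat{\mathbf{D}})\prec0$ is exactly the congruence of \cref{eq:LMI1} established in the previous paragraph, and (iii) $\Phi$ evaluated at the resulting $\mZ=(X,Y,\hat{\mathbf{A}},\hat{\mathbf{B}},\hat{\mathbf{C}},\hat{\mathbf{D}},\Pi,\Xi)$ returns $\mK$, by construction. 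Hence $\mZ\in\mathcal{G}_\gamma$ and $\Phi(\mZ)=\mK$.

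The main obstacle I anticipate is bookkeeping in the congruence identity: matching the exact row/column ordering of $\mM_\gamma$ (which groups the two $n_x$-blocks first, then the two $-\gamma I$ channels) against the bounded-real LMI \cref{eq:LMI1}, and tracking all transposes in $\Pi,\Xi$ versus $U,V$, so that the constraint $\Xi\Pi=I-YX$ (rather than $I-XY$) and the formula \cref{eq:change_of_variables_output} are mutually consistent. None of this is conceptually deep — it is the textbook derivation of the LMI solvability conditions for $\mathcal{H}_\infty$ output-feedback synthesis — so I would lean heavily on \cite{scherer1997multiobjective,gahinet1994linear} and relegate the explicit block computations to an appendix, keeping the main proof at the level of: "the change of variables \cref{eq:change_of_variables_output} and the congruence by $\mathrm{diag}(T,I,I)$ convert \cref{eq:LMI1} and $P\succ0$ into \cref{eq:lmi3}, and vice versa; continuity is clear from \Cref{lemma:connectivity_preliminary}; surjectivity follows by reversing the construction starting from any bounded-real certificate $P$."
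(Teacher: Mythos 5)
Your plan follows the same route as the paper's proof: continuity from \Cref{lemma:connectivity_preliminary}, the inclusion $\Phi(\mathcal{G}_\gamma)\subseteq\mathcal{K}_\gamma$ delegated to the standard LMI-based $\mathcal{H}_\infty$ controller reconstruction of \cite{scherer1997multiobjective,gahinet1994linear}, and surjectivity by starting from a bounded-real certificate $P\succ 0$ for a given $\mK\in\mathcal{K}_\gamma$, partitioning $P$ and $P^{-1}$, applying the forward substitution \cref{eq:change_of_var1}, and checking that $\mM_\gamma$ is the congruence of the block matrix in \cref{eq:LMI1} by $\mathrm{diag}(T,I,I)$ with $T=\bmat{X & I\\ \Pi & 0}$. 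The swap of which of $X,Y$ sits in $P$ versus $P^{-1}$ (the paper puts $Y$ in $P$ and $X$ in $P^{-1}$, matching the structure of $\mM_{11},\mM_{22}$) is indeed only bookkeeping, as you anticipate.

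One step, however, is asserted rather than justified and does not hold without an extra argument: you claim $\bmat{X & I\\ I & Y}\succ 0$ ``via the nonsingular congruence $\bmat{I & 0\\ Y & V}$,'' but nothing guarantees that the off-diagonal block of $P$ (your $U$, the paper's $\Xi$) or of $P^{-1}$ (your $V$, the paper's $\Pi$) is invertible for an arbitrary certificate $P$. If it is singular, your congruence matrix and the paper's $T$ are singular too, so you can conclude neither $\bmat{X & I\\ I & Y}\succ 0$ nor strict negativity of $\mM_\gamma$ (only $\preceq 0$), and you cannot invert the triangular factors to verify $\Phi(\mZ)=\mK$. The paper closes this by noting that, thanks to the strictness of \cref{eq:LMI1}, one may perturb $\Xi$ slightly so that $\det\Xi\neq 0$ without losing feasibility; since $\Pi=-\hat{Y}^{-1}\Xi^\tr X$ from \cref{eq:Ydef}--\cref{eq:Xdef}, this also makes $\Pi$ invertible and the rest of your argument goes through. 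Add this perturbation (or an equivalent genericity) step and your proof matches the paper's.
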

\begin{proof}
It is clear that $\Phi(\cdot)$ is a continuous mapping.  
To show that $\Phi$ is a mapping onto $\mathcal{K}_\gamma$, we need to prove the following statements: 
\begin{enumerate}
    \item For any arbitrary controller $ \mK \in \mathcal{K}_\gamma$, there exists $\mZ=(X,Y,\hat{\mathbf{A}},\hat{\mathbf{B}},\hat{\mathbf{C}},\hat{\mathbf{D}},\Pi,\Xi)\in\mathcal{G}_{\gamma}$ such that 
    $
        \Phi(\mZ) = \mK
    $.
    \item For all $ \mZ=(X,Y,\hat{\mathbf{A}},\hat{\mathbf{B}},\hat{\mathbf{C}},\hat{\mathbf{D}},\Pi,\Xi)\in\mathcal{G}_{\gamma}$, we have $\Phi(\mZ)\in \mathcal{K}_\gamma$.
\end{enumerate}

To show the first statement, let $\mK= \begin{bmatrix}
D_{\mK} & C{_\mK} \\ B{_\mK} & A{_\mK}
\end{bmatrix}\in\mathcal{K}_\gamma$ be arbitrary. By the bounded real lemma \cite{Dullerud99}, there exists $P\succ 0$ such that \cref{eq:LMI1} is feasible. We partition the matrix~$P$~as
\begin{equation} \label{eq:Ydef}
P=\begin{bmatrix} Y & \Xi \\ \Xi^\tr & \hat{Y} \end{bmatrix}.
\end{equation}
 Without loss of generality, we assume that $\det \Xi\neq 0$ (otherwise we can add a small perturbation on $\Xi$ thanks to the strict inequality in \cref{eq:LMI1}).  We further define
\begin{align}\label{eq:Xdef}
\begin{bmatrix}
X & \Pi^\tr \\ \Pi & \hat{X}
\end{bmatrix}:=\begin{bmatrix} Y & \Xi \\ \Xi^\tr & \hat{Y} \end{bmatrix}^{-1},
\qquad
T := 
\begin{bmatrix} X & I \\ \Pi & 0 \end{bmatrix}.
\end{align}
we can verify that
\begin{equation}\label{eq:Gset_cond1}
YX+\Xi\Pi=I,
\qquad
T^\tr  P T = \begin{bmatrix} X & I \\ I & Y \end{bmatrix}
\succ 0.
\end{equation}
Now we choose $(\hat{\mathbf{A}},\hat{\mathbf{B}},\hat{\mathbf{C}},\hat{\mathbf{D}})$ as
\begin{align}\label{eq:change_of_var1} 
\begin{split}
\hat{\mathbf{A}} =& Y(A + B_2 D{_\mK}C_2)X + \Xi B{_\mK} C_2 X \\
& \qquad \qquad \qquad + YB_2 C{_\mK}\Pi+ \Xi A{_\mK}\Pi, \\
\hat{\mathbf{B}} =& YB_2 D{_\mK} + \Xi B{_\mK}, \\
\hat{\mathbf{C}}  =& D{_\mK}C_2 X + C{_\mK}\Pi,\\
\hat{\mathbf{D}} =& D{_\mK}.
\end{split}
\end{align}

We can then verify that $\mM_\gamma(X,Y,\hat{\mathbf{A}},\hat{\mathbf{B}},\hat{\mathbf{C}},\hat{\mathbf{D}})$ is exactly the same as
\begin{align*}
\bmat{T^\tr & 0 & 0\\ 0 & I & 0\\ 0 & 0 & I}  \bmat{A_{\mathrm{cl}}^\tr P+P A_{\mathrm{cl}} & P B_{\mathrm{cl}} & C_{\mathrm{cl}}^\tr\\ B_{\mathrm{cl}}^\tr P & -\gamma I & D_{\mathrm{cl}}^\tr\\ C_{\mathrm{cl}} & D_{\mathrm{cl}} & -\gamma I}  \bmat{T & 0 & 0\\ 0 & I & 0\\ 0 & 0 & I},
\end{align*}
which is clearly negative definite due to \cref{eq:LMI1}.
Thus, we have  $\mZ=(X,Y,\hat{\mathbf{A}},\hat{\mathbf{B}},\hat{\mathbf{C}},\hat{\mathbf{D}},\Pi,\Xi)\in\mathcal{G}_{\gamma}$ by the definition of $\mathcal{G}_{\gamma}$.
Note that  \cref{eq:change_of_var1} can be compactly rewritten as
$$
\begin{bmatrix} \hat{\mathbf{D}} & \hat{\mathbf{C}}\\ \hat{\mathbf{B}} & \hat{\mathbf{A}}-YAX \end{bmatrix}
=
\begin{bmatrix} I & 0 \\ YB_2 & \Xi \end{bmatrix} 
\begin{bmatrix} D{_\mK} & C{_\mK} \\ B{_\mK} & A{_\mK}\end{bmatrix}
\begin{bmatrix} I & C_2 X \\ 0 & \Pi \end{bmatrix}.
$$
Based on \cref{lemma:connectivity_preliminary}, we have
$$
\begin{bmatrix}
D{_\mK} & C{_\mK} \\ 
B{_\mK} & A{_\mK}
\end{bmatrix}
=
\begin{bmatrix}
\Phi_D(\mZ) & \Phi_C(\mZ) \\
\Phi_B(\mZ) & \Phi_A(\mZ)
\end{bmatrix}=\Phi(\mZ).
$$

Therefore, the first statement is true. 
The second statement reduces to the standard controller construction for LMI-based $\mathcal{H}_\infty$-synthesis~\cite{scherer1997multiobjective}. We complete the proof.
\end{proof}

\begin{remark}
The main difference between our proof and that in \cite[Proposition 3.1]{zheng2021analysis} is that we have used the bigger LMI conditions \cref{eq:LMI1} and \cref{eq:lmi3} for $\mathcal{H}_\infty$ control. Simpler LMI conditions was used in \cite{zheng2021analysis} since it focuses on stability. \hfill $\square$
\end{remark}

Based on \Cref{proposition:Phi_surjective},  any path-connected component of $\mathcal{G}_{\gamma}$ has a path-connected image under the surjective mapping $\Phi$. Consequently, the number of path-connected components of $\mathcal{K}_{\gamma}$ will be no more than the number of path-connected components of $\mathcal{G}_{\gamma}$.
The number of path-connected components of the set $\mathcal{G}_{\gamma}$ is given below.
\begin{proposition}\label{lemma:Gn_connected_components}
The set $\mathcal{G}_{\gamma}$ has two path-connected components, given by
\vspace{-1mm}
\begin{align*}
\mathcal{G}_{\gamma}^+
=\ &
\left\{(X,Y,\hat{\mathbf{A}},\hat{\mathbf{B}},\hat{\mathbf{C}},\hat{\mathbf{D}},\Pi,\Xi)\in\mathcal{G}_{\gamma} \mid \, \det \Pi>0\right\}, \\
\mathcal{G}_{\gamma}^-
=\ &
\left\{(X,Y,\hat{\mathbf{A}},\hat{\mathbf{B}},\hat{\mathbf{C}},\hat{\mathbf{D}},\Pi,\Xi)\in\mathcal{G}_{\gamma} \mid \, \det \Pi<0\right\}.
\end{align*}
\end{proposition}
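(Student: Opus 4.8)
The plan is to show that $\mathcal{G}_\gamma$ splits into the two pieces $\mathcal{G}_\gamma^+$ and $\mathcal{G}_\gamma^-$, and that each piece is nonempty and path-connected. The splitting itself is immediate: by \Cref{lemma:connectivity_preliminary}, $\Pi$ is invertible on $\mathcal{G}_\gamma$, so $\det\Pi\neq 0$ throughout, and hence $\mathcal{G}_\gamma = \mathcal{G}_\gamma^+ \sqcup \mathcal{G}_\gamma^-$ is a partition into two relatively open sets; since they are disjoint open sets covering $\mathcal{G}_\gamma$, no path-connected subset can meet both. Thus $\mathcal{G}_\gamma$ has \emph{at least} two path-connected components (once we check both pieces are nonempty), and the work is to show each of $\mathcal{G}_\gamma^\pm$ is itself path-connected, which gives exactly two.

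For path-connectedness of $\mathcal{G}_\gamma^+$, I would argue as follows. Fix two points $\mZ_0=(X_0,Y_0,\hat{\mathbf{A}}_0,\hat{\mathbf{B}}_0,\hat{\mathbf{C}}_0,\hat{\mathbf{D}}_0,\Pi_0,\Xi_0)$ and $\mZ_1=(X_1,Y_1,\hat{\mathbf{A}}_1,\hat{\mathbf{B}}_1,\hat{\mathbf{C}}_1,\hat{\mathbf{D}}_1,\Pi_1,\Xi_1)$ in $\mathcal{G}_\gamma^+$. Because $\mathcal{F}_\gamma$ is convex, the straight-line segment $(X_t,Y_t,\hat{\mathbf{A}}_t,\hat{\mathbf{B}}_t,\hat{\mathbf{C}}_t,\hat{\mathbf{D}}_t) := (1-t)(\cdots)_0 + t(\cdots)_1$, $t\in[0,1]$, stays in $\mathcal{F}_\gamma$. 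Along this segment $I - Y_t X_t$ is a continuous (matrix-valued) function of $t$, and by the constraint defining $\mathcal{F}_\gamma$ (the first LMI in \cref{eq:lmi3} forces $\begin{bmatrix}X&I\\I&Y\end{bmatrix}\succ 0$, so the Schur complement gives $X - Y^{-1}\succ 0$ and in particular $\det(I-YX)\neq 0$) it is invertible for every $t$. Then I need a continuous choice of factorization $\Xi_t \Pi_t = I - Y_t X_t$ with $\det \Pi_t > 0$, agreeing with the given endpoint data at $t=0,1$. One clean way: first homotope $(\Pi_0,\Xi_0)$ within $\mathcal{G}_\gamma^+$ (keeping $(X,Y,\ldots,\hat{\mathbf{D}})$ fixed at their $t=0$ values) to the canonical pair $(\,I-Y_0X_0,\ I\,)$ — this requires moving $\Pi$ through $\mathrm{GL}^+$ and compensating with $\Xi$, using that $\mathrm{GL}^+(n_x)$ is path-connected; do the analogous homotopy at the $t=1$ endpoint; then along the segment $t\mapsto(X_t,\ldots,\hat{\mathbf{D}}_t)$ simply take $\Pi_t = I-Y_tX_t$ and $\Xi_t = I$, which is a valid continuous path in $\mathcal{G}_\gamma$ and lies in $\mathcal{G}_\gamma^+$ provided $\det(I-Y_tX_t)>0$ for all $t$.

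The last proviso is exactly where I expect the main obstacle: $\det(I-Y_tX_t)$ need not keep a constant sign along the segment, so the naive choice $\Xi_t=I$ may leave $\mathcal{G}_\gamma^+$. The fix is to absorb the sign into $\Xi$: write $I - Y_t X_t = \Xi_t \Pi_t$ with, say, $\Pi_t$ built to have $\det\Pi_t>0$ and $\Xi_t$ allowed to have either sign of determinant, so that $\det\Xi_t \det\Pi_t = \det(I-Y_tX_t)$ changes sign through $\det\Xi_t$ passing through zero. But $\Xi_t$ must stay invertible (it is forced invertible on $\mathcal{G}_\gamma$ by \Cref{lemma:connectivity_preliminary}), so $\det\Xi_t$ cannot pass through $0$ — meaning a continuous factorization with $\det\Pi_t>0$ \emph{forces} $\det(I-Y_tX_t)$ to keep constant sign along any path in a single component. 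This tells me the right statement is subtler: the sign of $\det(I-YX)$ is itself a locally constant function on $\mathcal{G}_\gamma$ (indeed on $\mathcal{F}_\gamma$, where it equals $\det(\mathrm{something}\ \succ 0)$ up to a computation), hence constant on the convex set $\mathcal{F}_\gamma$; combined with $\det(\Xi\Pi)=\det(I-YX)$ this pins down that $\operatorname{sign}\det\Pi$ and $\operatorname{sign}\det\Xi$ have a fixed product on $\mathcal{G}_\gamma$, and $\mathcal{G}_\gamma^\pm$ are distinguished purely by which of the two sign patterns $(+,+)$ or $(-,-)$ — equivalently the sign of $\det\Pi$ alone — is taken. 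So the cleanest route is: (i) show $\det(I-YX)$ has constant sign on $\mathcal{F}_\gamma$ by the Schur-complement identity and convexity, say $\det(I-YX) = c > 0$-signed everywhere (one checks the sign at a single convenient feasible point, e.g.\ from any $\mK\in\mathcal{K}_\gamma$ with its constructed $X,Y$, where $\begin{bmatrix}X&I\\I&Y\end{bmatrix}\succ0$ gives the sign); (ii) conclude $\mathcal{G}_\gamma = \{\det\Pi>0,\det\Xi>0\}\sqcup\{\det\Pi<0,\det\Xi<0\}$ (the mixed cases being vacuous), which are $\mathcal{G}_\gamma^+$ and $\mathcal{G}_\gamma^-$; (iii) for path-connectedness of $\mathcal{G}_\gamma^+$, given two points, first homotope each to a normal form with $\Pi = I-YX$, $\Xi = I$ (possible since $\mathrm{GL}^+(n_x)$ is connected and one can drag $\Pi$ within $\mathrm{GL}^+$ while redefining $\Xi := (I-YX)\Pi^{-1}$, which stays in $\mathrm{GL}^+$ because $\det(I-YX)>0$), then connect the two normal forms by the convex segment in $\mathcal{F}_\gamma$ with $\Xi_t\equiv I$, $\Pi_t := I-Y_tX_t\in\mathrm{GL}^+$; (iv) the same argument with $\mathrm{GL}^-$ in place of $\mathrm{GL}^+$ at the normal-form stage handles $\mathcal{G}_\gamma^-$, and nonemptiness of both pieces follows from \Cref{proposition:Phi_surjective} (or directly, by right-multiplying any feasible $(\Pi,\Xi)$ by a sign matrix). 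Filling in the Schur-complement sign computation in step (i) and the bookkeeping of the normal-form homotopies in step (iii) is routine; the conceptual crux is recognizing that the sign of $\det(I-YX)$ is a conserved quantity on the convex LMI set, which is what makes the two-component count come out exactly right.
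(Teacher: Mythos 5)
Your overall strategy (split $\mathcal{G}_\gamma$ by $\operatorname{sign}\det\Pi$, exploit convexity of $\mathcal{F}_\gamma$ and connectedness of $\mathrm{GL}^{\pm}_{n_x}$, recover $\Xi$ from $\Xi=(I-YX)\Pi^{-1}$) is sound and is in spirit the paper's argument, but your execution contains a concrete sign error that breaks steps (ii) and (iii). From $\bmat{X & I\\ I & Y}\succ 0$ one gets $X\succ 0$ and $Y\succ X^{-1}$, so the eigenvalues of $YX$ (the same as those of $X^{1/2}YX^{1/2}$) are real and all greater than $1$; hence every eigenvalue of $I-YX$ is negative and $\det(I-YX)$ has sign $(-1)^{n_x}$. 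Your claim that the conserved sign is positive therefore fails whenever $n_x$ is odd --- including the paper's own Example 1 with $n_x=1$. Consequently the classification in (ii) is wrong for odd $n_x$ (the correct sign patterns are then the mixed ones, $(\det\Pi>0,\det\Xi<0)$ and $(\det\Pi<0,\det\Xi>0)$, and your claimed decomposition would make $\mathcal{G}_\gamma$ empty), and the normal form $(\Pi,\Xi)=(I-YX,\,I)$ in (iii) lies in $\mathcal{G}_\gamma^-$ rather than $\mathcal{G}_\gamma^+$, so it cannot be reached by dragging $\Pi$ inside $\mathrm{GL}^+_{n_x}$, and the assertion that $\Xi=(I-YX)\Pi^{-1}$ stays in $\mathrm{GL}^+_{n_x}$ is also false in that case.

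The repair is short and makes the normal-form stage unnecessary: given two points of $\mathcal{G}_\gamma^+$, take the convex segment in $\mathcal{F}_\gamma$ for $(X,Y,\hat{\mathbf{A}},\hat{\mathbf{B}},\hat{\mathbf{C}},\hat{\mathbf{D}})$, any path $\Pi_t$ in $\mathrm{GL}^+_{n_x}$ from $\Pi_0$ to $\Pi_1$, and set $\Xi_t:=(I-Y_tX_t)\Pi_t^{-1}$; the endpoints match because $\Xi_i=(I-Y_iX_i)\Pi_i^{-1}$ is forced by $\Xi_i\Pi_i=I-Y_iX_i$ and \cref{lemma:connectivity_preliminary}, and the path stays in $\mathcal{G}_\gamma^+$ no matter what the sign of $\det(I-YX)$ is. This is precisely how the paper argues: the map $(X,Y,\hat{\mathbf{A}},\hat{\mathbf{B}},\hat{\mathbf{C}},\hat{\mathbf{D}},\Pi)\mapsto(X,Y,\hat{\mathbf{A}},\hat{\mathbf{B}},\hat{\mathbf{C}},\hat{\mathbf{D}},\Pi,(I-YX)\Pi^{-1})$ is a continuous bijection (with continuous inverse given by projection) from $\mathcal{F}_\gamma\times\mathrm{GL}_{n_x}$ onto $\mathcal{G}_\gamma$, so $\mathcal{G}_\gamma$ inherits exactly the two components of $\mathcal{F}_\gamma\times\mathrm{GL}_{n_x}$, labelled by $\operatorname{sign}\det\Pi$. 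Your observation that $\operatorname{sign}\det(I-YX)$ is constant on $\mathcal{F}_\gamma$ is correct and is indeed why the sign of $\det\Xi$ is slaved to that of $\det\Pi$, but the proof must not pin that conserved sign to $+1$; only its constancy (or, better, the parametrization above) should be used.
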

\vspace{1mm}
\begin{proof}
First, $\mathcal{F}_{\gamma}$ is path-connected since it is convex.  The set of real invertible matrices $\mathrm{GL}_{n_x}\!=\!\{\Pi\in\mathbb{R}^{n_x\times n_x}\mid\,\det \Pi\!\neq\! 0\}$ has two path-connected components \cite{lee2013introduction}
\begin{align*}
\mathrm{GL}^+_{n_x}&=\{\Pi\in\mathbb{R}^{n_x\times n_x}\mid\,\det \Pi> 0\},\\
\mathrm{GL}^-_{n_x}&=\{\Pi\in\mathbb{R}^{n_x\times n_x}\mid\,\det \Pi< 0\}.
\end{align*}
Thus, the Cartesian product $\mathcal{F}_{\gamma}\times \mathrm{GL}_{n_x}$ has two path-connected components. We further observe that the mapping from $(X,Y,\hat{\mathbf{A}},\hat{\mathbf{B}},\hat{\mathbf{C}},\hat{\mathbf{D}},\Pi)$
to $(X,Y,\hat{\mathbf{A}},\hat{\mathbf{B}},\hat{\mathbf{C}},\hat{\mathbf{D}},\Pi,(I-YX)\Pi^{-1})$
is a continuous bijection from $\mathcal{F}_{\gamma}\times \mathrm{GL}_{n_x}$ to $\mathcal{G}_{\gamma}$. This immediately leads to the desired conclusion.
\end{proof}

We note that the proofs for \Cref{lemma:Gn_connected_components} and  \cite[Proposition 3.2]{zheng2021analysis} are similar. As a matter of fact, Proposition 3.2 in \cite{zheng2021analysis} can be viewed as a special case of \Cref{lemma:Gn_connected_components} with $\gamma\rightarrow+\infty$. 
Now \cref{Theo:disconnectivity} can be proved by combining \cref{proposition:Phi_surjective} with \cref{lemma:Gn_connected_components}.

\vspace{1mm}

\textbf{Proof of \cref{Theo:disconnectivity}:} 
 We define 
 $$\mathcal{K}_{\gamma}^{+}:=\Phi(\mathcal{G}_{\gamma}^+)\quad  \text{and} \quad  \mathcal{K}_{\gamma}^{-}:=\Phi(\mathcal{G}_{\gamma}^-).$$
Then, we have $\mathcal{K}_{\gamma} = \mathcal{K}_{\gamma}^{+} \cup \mathcal{K}_{\gamma}^{-}.$ If $\mathcal{K}_{\gamma}$ is not path-connected,
the two path-connected components of $\mathcal{K}_{\gamma}$ are exactly $\mathcal{K}_{\gamma}^{+}$ and $\mathcal{K}_{\gamma}^{-}$.  Based on \cref{proposition:Phi_surjective}, \cref{Theo:disconnectivity} holds. \hfill 
$\square$

\vspace{1mm}

In the next section, we further discuss some implications of \Cref{Theo:disconnectivity} on $\mathcal{H}_\infty$-constrained policy optimization.

\subsection{Implications for $\mathcal{H}_\infty$-constrained policy optimization} \label{subsection:implications}

To understand the implications of \cref{Theo:disconnectivity} for policy optimization, we need to formalize the relationship between $\mathcal{K}_{\gamma}^{+}$ and $\mathcal{K}_{\gamma}^{-}$. For this, we introduce the notion of similarity transformation that is widely used in control.  
For any $T\in\mathrm{GL}_{n_x}$, let $\mathscr{T}_T:\mathcal{C}_{\mathrm{stab}}\rightarrow\mathcal{C}_{\mathrm{stab}}$ denote the mapping given by
$$
\mathscr{T}_T(\mK)
:=\begin{bmatrix}
D_{\mK} & C_{\mK}T^{-1} \\
TB_{\mK} & TA_{\mK}T^{-1}
\end{bmatrix},
$$
which represents similarity transformations on $\mathcal{C}_{\mathrm{stab}}$. 

We have a result that is similar to \cite[Theorem 3.2]{zheng2021analysis}.

\begin{theorem} \label{Theo:Cn_homeomorphic}
{If ${\mathcal{K}}_{\gamma}$ has two path-connected components $\mathcal{K}_{\gamma}^{+}$ and $\mathcal{K}_{\gamma}^{-}$, then $\mathcal{K}_{\gamma}^{+}$ and $\mathcal{K}_{\gamma}^{-}$ are diffeomorphic under the mapping $\mathscr{T}_T$, for any  $T\in\mathrm{GL}_{n_x}$ with $\det T<0$. % restricted on $\mathcal{K}_{\gamma}^{(1)}$ gives a homeomorphism from $\mathcal{K}_{\gamma}^{(1)}$ to $\mathcal{K}_{\gamma}^{(2)}$.
}
\end{theorem}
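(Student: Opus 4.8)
The plan is to show $\mathscr{T}_T$ restricts to a diffeomorphism between the two components. The argument has three ingredients: (i) $\mathscr{T}_T$ maps $\mathcal{K}_\gamma$ into itself, (ii) it is a diffeomorphism of $\mathcal{K}_\gamma$ (indeed $\mathscr{T}_T^{-1}=\mathscr{T}_{T^{-1}}$, and each $\mathscr{T}_T$ is a restriction of a linear map, hence smooth with smooth inverse), and (iii) when $\det T<0$ it swaps the two components rather than preserving each. Ingredient (i) is classical: a similarity transformation $\xi\mapsto T\xi$ on the controller state induces the state transformation $\mathrm{diag}(I,T)$ on the closed-loop system $(A_{\mathrm{cl}},B_{\mathrm{cl}},C_{\mathrm{cl}},D_{\mathrm{cl}})$, which leaves $A_{\mathrm{cl}}$'s spectrum and the transfer function $\mathbf{T}_{zw}$ unchanged; hence $\mathscr{T}_T$ preserves both the Hurwitz property and the $\mathcal{H}_\infty$ norm, so $\mathscr{T}_T(\mathcal{K}_\gamma)\subseteq\mathcal{K}_\gamma$. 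Ingredient (ii) is then immediate from the explicit formula, since matrix multiplication and inversion of the fixed matrix $T$ are smooth.

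For ingredient (iii), I would work through the surjection $\Phi$ of \Cref{proposition:Phi_surjective}. The key computation is to track how a similarity transformation on the controller interacts with the change of variables \cref{eq:change_of_var1}: given $\mK\in\mathcal{K}_\gamma$ with associated data $\mZ=(X,Y,\hat{\mathbf{A}},\hat{\mathbf{B}},\hat{\mathbf{C}},\hat{\mathbf{D}},\Pi,\Xi)\in\mathcal{G}_\gamma$ (so $\Phi(\mZ)=\mK$), I claim $\mathscr{T}_T(\mK)=\Phi(\tilde{\mZ})$ where $\tilde{\mZ}=(X,Y,\hat{\mathbf{A}},\hat{\mathbf{B}},\hat{\mathbf{C}},\hat{\mathbf{D}},T\Pi,\Xi T^{-1})$. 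This follows because the new certificate matrix for $\mathscr{T}_T(\mK)$ can be taken to be $\mathrm{diag}(I,T)^{-\tr}P\,\mathrm{diag}(I,T)^{-1}$ with $P$ the certificate for $\mK$; partitioning this as in \cref{eq:Ydef} leaves the $(1,1)$-block $Y$ unchanged and replaces $\Xi$ by $\Xi T^{-1}$, and correspondingly from \cref{eq:Xdef} the block $\Pi$ becomes $T\Pi$ while $X$ is unchanged. Plugging into \cref{eq:change_of_var1}, the $(\hat{\mathbf{A}},\hat{\mathbf{B}},\hat{\mathbf{C}},\hat{\mathbf{D}})$ are unchanged because the $T$-factors on $\Pi$ and $\Xi$ cancel (this is the same cancellation that makes $\Phi$ well-defined), and the relation $(T\Pi)(\Xi T^{-1})=T(\Pi\Xi)T^{-1}=\Pi\Xi=I-YX$ is preserved, so $\tilde{\mZ}\in\mathcal{G}_\gamma$. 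Since $\det(T\Pi)=\det T\cdot\det\Pi$ has the opposite sign to $\det\Pi$ when $\det T<0$, we get $\mathscr{T}_T(\mathcal{K}_\gamma^+)\subseteq\mathcal{K}_\gamma^-$ and $\mathscr{T}_T(\mathcal{K}_\gamma^-)\subseteq\mathcal{K}_\gamma^+$; combined with $\mathscr{T}_T\circ\mathscr{T}_{T^{-1}}=\mathrm{id}$ and $\det T^{-1}<0$, these inclusions are equalities, so $\mathscr{T}_T$ restricts to a diffeomorphism $\mathcal{K}_\gamma^+\to\mathcal{K}_\gamma^-$.

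The main obstacle I anticipate is ingredient (iii)'s bookkeeping — specifically verifying that the partitioned inverse in \cref{eq:Xdef} transforms as claimed under conjugation of $P$ by $\mathrm{diag}(I,T)$, and confirming the $T$-cancellation in \cref{eq:change_of_var1} — rather than anything conceptually deep; ingredients (i) and (ii) are routine. An alternative, possibly cleaner route that sidesteps $\Phi$ entirely: show directly that for $\det T<0$, $\mathscr{T}_T$ cannot map a component to itself, by a continuity/degree argument (if $\mathscr{T}_T$ fixed $\mathcal{K}_\gamma^+$ setwise, then composing with a continuous path of transformations $\mathscr{T}_{T(s)}$ from $T$ to $-I$ — wait, $\det(-I)$ may be positive — one would instead use that any $T$ with $\det T<0$ is path-connected within $\mathrm{GL}_{n_x}^-$, and reduce to a single reflection; then exhibit one controller for which the reflection demonstrably changes components via the sign of $\det\Pi$). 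I would present the $\Phi$-based argument as the primary proof since it reuses the machinery already built.
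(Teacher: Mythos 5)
Your proposal is correct and takes essentially the same route as the paper's own proof: lift $\mK=\Phi(\mZ)$ to $\mathcal{G}_\gamma$, replace $(\Pi,\Xi)$ by $(T\Pi,\Xi T^{-1})$ while leaving $(X,Y,\hat{\mathbf{A}},\hat{\mathbf{B}},\hat{\mathbf{C}},\hat{\mathbf{D}})$ unchanged, use $\det(T\Pi)=\det T\cdot\det\Pi<0$ to show the components are swapped, and conclude equality of the inclusions via $\mathscr{T}_{T^{-1}}$, with smoothness of $\mathscr{T}_T$ being immediate. One minor bookkeeping slip: the coupling constraint defining $\mathcal{G}_\gamma$ is $\Xi\Pi=I-YX$, so the correct check is $(\Xi T^{-1})(T\Pi)=\Xi\Pi=I-YX$; your computation $(T\Pi)(\Xi T^{-1})=T\Pi\Xi T^{-1}$ is neither the required quantity nor equal to $\Pi\Xi$ in general, though the intended verification goes through instantly with the factors in the right order.
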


Furthermore, similar to \cite[Theorem 3.3]{zheng2021analysis},  we have sufficient conditions to certify the path-connectedness of $\mathcal{K}_{\gamma}$.

\begin{theorem} \label{Theo:connectivity_conditions}
Let $\gamma>\gamma^\star$. The following statements hold.
\begin{enumerate}
    \item 
$\mathcal{K}_{\gamma}$ is path-connected if it has one non-minimal dynamical controller.

\item Suppose the plant \cref{eq:Dynamic} is single-input or single-output, i.e., $m=1$ or $p=1$. The set ${\mathcal{K}}_{\gamma}$ is path-connected if and only if it has a non-minimal dynamical controller.
\end{enumerate}
\end{theorem}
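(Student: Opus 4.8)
The plan is to mimic the strategy behind \cite[Theorem 3.3]{zheng2021analysis} while tracking the extra $\mathcal{H}_\infty$ constraint through the surjection $\Phi$. For part~(1), suppose $\mathcal{K}_\gamma$ contains a non-minimal controller $\mK$, i.e.\ one whose state-space realization $(A_\mK,B_\mK,C_\mK)$ is not both controllable and observable. I would first argue that such a $\mK$ necessarily lies in the intersection $\mathcal{K}_\gamma^+\cap\mathcal{K}_\gamma^-$ after (if needed) an arbitrarily small perturbation that preserves membership in the open set $\mathcal{K}_\gamma$; the point is that when the realization is non-minimal, the preimage structure under $\Phi$ is richer, and one can find preimages $\mZ^\pm$ with $\det\Pi>0$ and $\det\Pi<0$ mapping to the same (or arbitrarily close) controllers. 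Concretely, non-minimality lets us choose the partition of $P$ in \cref{eq:Ydef}--\cref{eq:Xdef}, equivalently the factorization $\Xi\Pi=I-YX$, with either sign of $\det\Pi$: a Kalman-type decomposition produces a nontrivial null direction that can be absorbed into a block of $\Pi$ whose sign we are free to flip. Once $\mathcal{K}_\gamma^+\cap\mathcal{K}_\gamma^-\neq\varnothing$, since both are path-connected (being continuous images of the path-connected sets $\mathcal{G}_\gamma^\pm$ by \Cref{proposition:Phi_surjective} and \Cref{lemma:Gn_connected_components}) and they cover $\mathcal{K}_\gamma$, their union $\mathcal{K}_\gamma$ is path-connected.

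For part~(2), the ``if'' direction is immediate from part~(1). For ``only if'', I would prove the contrapositive: if every controller in $\mathcal{K}_\gamma$ is minimal and the plant is SISO-like ($n_u=1$ or $n_y=1$), then $\mathcal{K}_\gamma$ is disconnected. Here I would invoke the diffeomorphism $\mathscr{T}_T$ from \Cref{Theo:Cn_homeomorphic} with $\det T<0$, which maps $\mathcal{K}_\gamma^+$ onto $\mathcal{K}_\gamma^-$, together with a parity/degree argument showing that no controller can be fixed by, or connected across, an orientation-reversing similarity transformation when all realizations are minimal. The key classical fact is that for a fixed transfer function the set of minimal realizations is a single orbit of $\mathrm{GL}_{n_x}$ acting by similarity, and when $n_u=1$ or $n_y=1$ this orbit (through the controllable or observable canonical form) is such that the isotropy considerations force the two orientation classes to stay separate — i.e.\ $\mathcal{K}_\gamma^+\cap\mathcal{K}_\gamma^-=\varnothing$. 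Suppose for contradiction some minimal $\mK$ lies in both components; then it admits two preimages $\mZ^\pm\in\mathcal{G}_\gamma^\pm$, and unwinding \cref{eq:change_of_variables_output}, \cref{eq:Xdef} shows the two associated similarity transformations between minimal realizations differ by a matrix of negative determinant, while minimality in the SISO case pins this matrix down (up to positive scalars) to one with fixed-sign determinant — a contradiction.

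The order of steps: (i) recall that $\mathcal{K}_\gamma^\pm$ are path-connected and cover $\mathcal{K}_\gamma$; (ii) for part~(1), show non-minimality $\Rightarrow$ $\mathcal{K}_\gamma^+\cap\mathcal{K}_\gamma^-\neq\varnothing$ via freedom in the sign of $\det\Pi$ in the factorization $\Xi\Pi=I-YX$; (iii) conclude path-connectedness; (iv) for part~(2) ``only if'', use minimality of all controllers plus the SISO hypothesis to show the two preimage classes are disjoint, so the realization orbit structure forbids crossing between $\mathcal{G}_\gamma^+$ and $\mathcal{G}_\gamma^-$ images; (v) invoke \Cref{Theo:Cn_homeomorphic} to identify the two components. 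The main obstacle I anticipate is step~(ii) made rigorous \emph{within} $\mathcal{K}_\gamma$: I must verify that the sign-flipped preimage $\mZ^-$ still satisfies the strict LMI \cref{eq:lmi3} (so that it genuinely lies in $\mathcal{G}_\gamma$, not merely in a closure), and that the construction does not inadvertently change the controller. The strictness of \cref{eq:LMI1}/\cref{eq:lmi3} gives the needed perturbation room, but the bookkeeping — choosing the right block coordinates so that enlarging the realization and flipping a determinant sign leaves $\Phi(\mZ)$ unchanged — is where the real work lies, and it is exactly the place where the $\mathcal{H}_\infty$ matrix inequality is more delicate than the pure-stability Lyapunov inequality used in \cite{zheng2021analysis}.
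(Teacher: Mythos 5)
Your overall strategy (show $\mathcal{K}_{\gamma}^{+}\cap\mathcal{K}_{\gamma}^{-}\neq\varnothing$ and conclude connectivity of the union of the two path-connected images) is the same as the paper's, but the key step of part (1) has a genuine gap. You claim that the given non-minimal controller $\mK$ itself admits preimages $\mZ^{\pm}\in\mathcal{G}_{\gamma}^{\pm}$ because "non-minimality lets us choose the factorization $\Xi\Pi=I-YX$ with either sign of $\det\Pi$." This does not work as stated: replacing $(\Pi,\Xi)$ by $(S\Pi,\Xi S^{-1})$ in \cref{eq:change_of_variables_output} changes the image from $\Phi(\mZ)=\mK$ to $\mathscr{T}_{S}(\mK)$, so producing two preimages of the \emph{same} controller with opposite signs of $\det\Pi$ (with the same $(X,Y,\hat{\mathbf{A}},\hat{\mathbf{B}},\hat{\mathbf{C}},\hat{\mathbf{D}})$) requires $\mathscr{T}_{S}(\mK)=\mK$ for some $S$ with $\det S<0$. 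A generic non-minimal controller is \emph{not} invariant under any orientation-reversing similarity transformation (e.g.\ a Kalman-form realization with nonzero coupling blocks), so "absorbing a null direction into a block of $\Pi$ whose sign we flip" does not leave $\Phi(\mZ)$ unchanged, and an arbitrarily small perturbation inside the open set $\mathcal{K}_\gamma$ does not repair membership in the intersection. The missing idea, which is what the paper does, is not to work with the given controller at all: non-minimality yields a reduced-order controller of state dimension $n_x-1$ that stabilizes and meets the $\mathcal{H}_\infty$ bound; augmenting it with a completely decoupled stable mode (the scalar $-1$) gives a full-order $\mK\in\mathcal{K}_\gamma$ that is \emph{exactly} fixed by $\mathscr{T}_{T}$ with $T=\operatorname{diag}(I_{n_x-1},-1)$, $\det T<0$. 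Since (by the argument proving \cref{Theo:Cn_homeomorphic}) $\mathscr{T}_{T}$ maps $\mathcal{K}_{\gamma}^{\pm}$ into $\mathcal{K}_{\gamma}^{\mp}$, this fixed point lies in $\mathcal{K}_{\gamma}^{+}\cap\mathcal{K}_{\gamma}^{-}$, and path-connectedness follows from \cref{proposition:Phi_surjective} and \cref{lemma:Gn_connected_components}. Note also that the difficulty you anticipate (perturbing the strict LMI \cref{eq:lmi3} to keep the flipped preimage feasible) is a red herring in the correct argument: because the added mode is decoupled, the closed-loop spectrum splits and the transfer function $\mathbf{T}_{zw}$ is unchanged, so no LMI bookkeeping is needed.

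For part (2), the "if" direction indeed follows from part (1), but your "only if" sketch is incomplete in two respects: disjointness of $\mathcal{K}_{\gamma}^{+}$ and $\mathcal{K}_{\gamma}^{-}$ alone does not prove that $\mathcal{K}_\gamma$ is topologically disconnected, and a controller lying in both images need not have two preimages sharing the same $(X,Y,\hat{\mathbf{A}},\hat{\mathbf{B}},\hat{\mathbf{C}},\hat{\mathbf{D}})$, so you cannot immediately extract a single negative-determinant similarity fixing a minimal realization. The paper does not reprove this direction either; it invokes the argument of Theorem 3.3 of the cited LQG landscape paper, which handles exactly these points (tracking a sign change of a minimality certificate along a connecting path to force a non-minimal controller on it), so if you want a self-contained proof you need that intermediate-value-type argument rather than the isotropy heuristic alone.
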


The proofs of \Cref{Theo:Cn_homeomorphic,Theo:connectivity_conditions} are adapted from \cite{zheng2021analysis}, and we provide them in the appendix for completeness. 
\Cref{Theo:Cn_homeomorphic,Theo:connectivity_conditions} bring positive news on local policy search methods for $\mathcal{H}_\infty$-constrained optimization \cref{eq:policy-optimization}. If $\mathcal{K}_\gamma$ is path-connected, it makes sense to initialize the policy search from any point in the feasible set. If $\mathcal{K}_\gamma$ has two path-connected components, then the initial point may fall into either of the components. If the cost function $J(\mK)$ is invariant with respect to similarity transformations (e.g. the LQG cost), then both components include global minima. It becomes reasonable to initialize the policy search within either path-connected component. The following corollary is immediate. 

\begin{corollary}
Suppose the cost function $J(\mK)$ is invariant with respect to similarity transformations, then there exists a continuous path connecting any feasible point $\mK \in \mathcal{K}_\gamma$ to a global minimum of \cref{eq:policy-optimization} if it exists.  
\end{corollary}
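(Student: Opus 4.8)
The plan is a short case analysis built entirely on \Cref{Theo:disconnectivity}, \Cref{Theo:Cn_homeomorphic}, and the assumed similarity-invariance of $J$. Fix an arbitrary feasible point $\mK\in\mathcal{K}_\gamma$ and suppose a global minimizer $\mK^\star$ of \cref{eq:policy-optimization} exists, with optimal value $J^\star:=J(\mK^\star)$. By \Cref{Theo:disconnectivity}, $\mathcal{K}_\gamma$ has either one or two path-connected components, and I would treat these possibilities in turn.

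If $\mathcal{K}_\gamma$ is path-connected, or if $\mathcal{K}_\gamma$ has two components $\mathcal{K}_\gamma^{+},\mathcal{K}_\gamma^{-}$ and $\mK,\mK^\star$ happen to lie in the same one, then by the definition of path-connectivity there is a continuous path inside $\mathcal{K}_\gamma$ joining $\mK$ to $\mK^\star$, and we are done. The only remaining case is $\mK\in\mathcal{K}_\gamma^{+}$ and $\mK^\star\in\mathcal{K}_\gamma^{-}$ (or vice versa). Here I would pick any $T\in\mathrm{GL}_{n_x}$ with $\det T<0$; by \Cref{Theo:Cn_homeomorphic}, $\mathscr{T}_T$ is a diffeomorphism from $\mathcal{K}_\gamma^{-}$ onto $\mathcal{K}_\gamma^{+}$, so $\mathscr{T}_T(\mK^\star)\in\mathcal{K}_\gamma^{+}$. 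Since a similarity transformation leaves the closed-loop transfer function $\mathbf{T}_{zw}$ unchanged, $\mathscr{T}_T(\mK^\star)$ is feasible, and by the assumed invariance $J(\mathscr{T}_T(\mK^\star))=J(\mK^\star)=J^\star$. Thus $\mathscr{T}_T(\mK^\star)$ is itself a global minimizer lying in the same component $\mathcal{K}_\gamma^{+}$ as $\mK$, and path-connectivity of $\mathcal{K}_\gamma^{+}$ supplies the desired continuous path from $\mK$ to this global minimizer inside $\mathcal{K}_\gamma$.

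I do not expect a serious obstacle: the corollary is pure bookkeeping layered on top of the structural theorems. The single point worth stating carefully is the observation that a similarity-invariant cost possesses a global minimizer in \emph{both} components as soon as it possesses one in either, which is exactly what collapses every case to ``two points in the same path-connected component.'' If desired, I would also add a remark that attainment of the infimum is a genuine hypothesis (the open, unbounded set $\mathcal{K}_\gamma$ need not contain a global minimizer in general), so the conditional phrasing ``if it exists'' is essential, though it poses no difficulty for the argument itself.
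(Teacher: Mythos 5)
Your proposal is correct and follows essentially the same reasoning the paper relies on: the paper treats this corollary as immediate from the preceding discussion, which is exactly your observation that \Cref{Theo:Cn_homeomorphic} (together with similarity-invariance of $J$ and of $\mathbf{T}_{zw}$) places a global minimizer in whichever path-connected component contains the initial point, after which path-connectivity of that component supplies the path. The case analysis and the note on the ``if it exists'' hypothesis are fine and add nothing beyond the paper's intended argument.
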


%(Do we need a formal theorem here?)

\subsection{The case of strictly proper controllers}

We briefly discuss the case of strictly proper dynamical controllers with $D_{\mK}=0$, which is required in some classical control problems, including the continuous-time LQG problem \cite{zhou1996robust}. 
The topological properties of $\tilde{\mathcal{K}}_\gamma$ in \cref{eq:Hinfset} and $\mathcal{K}_\gamma$ in \cref{eq:Hinfset1} are identical.  
To see this, we let 
$$
    \begin{aligned}
    \tilde{\mathcal{F}}_\gamma & =\{(X,Y,\hat{\mathbf{A}},\hat{\mathbf{B}},\hat{\mathbf{C}},\hat{\mathbf{D}}) \in \mathcal{F}_\gamma \mid \hat{\mathbf{D}}=0\}, \\
    \tilde{\mathcal{G}}_\gamma & = \{(X,Y,\hat{\mathbf{A}},\hat{\mathbf{B}},\hat{\mathbf{C}},\hat{\mathbf{D}},\Pi,\Xi) \in \mathcal{G}_\gamma \mid \hat{\mathbf{D}}=0\}.
    \end{aligned}
$$
%
%$\tilde{\mathcal{F}}_\gamma$ denote the subset of $\mathcal{F}_\gamma$ where all the elements satisfy $\hat{\mathbf{D}}=0$. Let $\tilde{\mathcal{G}}_\gamma$ denote the subset of $\mathcal{G}_\gamma$ where all the elements satisfy $\hat{\mathbf{D}}=0$. 
%
%
Minor modification of the proofs in \Cref{subsection:implications,subsection:proof-theorem-1} can show that $\tilde{\mathcal{F}}_\gamma$ is path-connected, and that $\tilde{\mathcal{G}}_\gamma$ has two path-connected components. The same mapping $\Phi$ in \cref{eq:change_of_variables_output} is a continuous and surjective mapping from $\tilde{\mathcal{G}}_\gamma$ to $\tilde{\mathcal{K}}_\gamma$. Therefore, we conclude that $\tilde{\mathcal{K}}_\gamma$ has at most two path-connected components and they are diffeomorphic under the similarity transformation with $\det(T)<0$. 

\section{Revisit sublevel sets in LQG and $\mathcal{H}_\infty$ control} \label{section:sublevel-set}

The results in \Cref{sec:connectivity} can be also interpreted as the connectivity of strict sublevel sets in optimal $\mathcal{H}_\infty$ control. 
Based on \eqref{eq:hinf-transfer-function}, $\mathbf{T}_{zw}$ can be viewed as a function of $\mK$, and the optimal $\mathcal{H}_\infty$ synthesis \cite{zhou1996robust} can be formulated as
%Precisely, we choose $J(\mK) =  \norm{\mathbf{T}_{zw}}_\infty$ with in , and $\mathcal{K} = \mathcal{C}_{\mathrm{stab}}$ in . Then, \eqref{eq:policy-optimization} becomes 
\begin{equation} \label{eq:hinfcontrol}
    \begin{aligned}
    \min_{\mK} \quad  &\norm{\mathbf{T}_{zw}}_\infty \\
    \text{subject to} \quad & \mK \in \mathcal{C}_{\mathrm{stab}}.
    \end{aligned}
\end{equation}
Now, $\mathcal{K}_\gamma$ in \eqref{eq:Hinfset} is exactly the $\gamma$-level strict sublevel set of the optimal $\mathcal{H}_\infty$ control \eqref{eq:hinfcontrol}. Thus, \Cref{Theo:disconnectivity,Theo:Cn_homeomorphic,Theo:connectivity_conditions} characterize the strict sub-level sets of optimal $\mathcal{H}_\infty$ control. 

In addition to \eqref{eq:hinfcontrol}, the proof idea of using the change of variables \eqref{eq:change_of_variables_output} can be applied to other output feedback control problems to establish connectivity of their strict sublevel sets. For example, we can consider an $\mathcal{H}_2$ formulation of the LQG control~\cite{zheng2021analysis} as follows  
\begin{equation} \label{eq:h2control}
    \begin{aligned}
    \min_{\mK} \quad  &\norm{\mathbf{T}_{zw}}_{2}^2\\
    \text{subject to} \quad & \mK \in \mathcal{C}_{\mathrm{stab}} \cap \{\mK \mid D_{\mK}=0\},
    \end{aligned}
\end{equation}
where $\|\mathbf{T}_{zw}\|_{2}$ denotes the $\mathcal{H}_2$ norm of $\mathbf{T}_{zw}$. This problem \cref{eq:h2control} covers the LQG control as a special case when the~dynamics in \eqref{eq:Dynamic} are chosen appropriately  (see the appendix for details). Then, the same proof techniques in  \Cref{sec:connectivity} can establish the connectivity of the strict sublevel sets of \eqref{eq:h2control}:
\begin{equation} \label{eq:H2-levelset}
    \mathcal{L}_\gamma = \{\mK \in \mathcal{C}_{\mathrm{stab}} \mid D_{\mK}=0, \norm{\mathbf{T}_{zw}}_{2}^2 < \gamma \}.
\end{equation}
We have the following result (see the appendix for details). 
\begin{theorem} \label{theo:LQG-sublevel-set}
Under \cite[Assumption 1]{zheng2021analysis}, the strict sublevel set $\mathcal{L}_\gamma$ \eqref{eq:H2-levelset} has at most two path-connected components $\mathcal{L}_\gamma^{(1)}$ and $\mathcal{L}_\gamma^{(2)}$.  If so, $\mathcal{L}_\gamma^{(1)}$ and $\mathcal{L}_\gamma^{(2)}$ are diffeomorphic under the mapping $\mathscr{T}_T$, for any $T\in\mathrm{GL}_{n_x}$ with $\det T<0$. 
\end{theorem}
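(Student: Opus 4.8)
The plan is to rerun, almost verbatim, the three-step argument behind \Cref{Theo:disconnectivity}, with the bounded real lemma replaced by the standard pair of $\mathcal{H}_2$-performance LMIs. First I would record a (non-convex) certificate characterization of $\mathcal{L}_\gamma$: under \cite[Assumption~1]{zheng2021analysis} the LQG data forces $D_{\mathrm{cl}}=0$ for every strictly proper $\mK$, so a finite continuous-time $\mathcal{H}_2$ norm is automatic, and $\mK\in\mathcal{L}_\gamma$ if and only if there exist $P\succ 0$ and $W=W^\tr$ with
\[
\bmat{A_{\mathrm{cl}}^\tr P+PA_{\mathrm{cl}} & C_{\mathrm{cl}}^\tr\\ C_{\mathrm{cl}} & -I}\prec 0,\qquad \bmat{W & B_{\mathrm{cl}}^\tr P\\ PB_{\mathrm{cl}} & P}\succ 0,\qquad \Tr W<\gamma .
\]
Applying the congruence $T=\bmat{X & I\\ \Pi & 0}$ (on the first inequality) and $\mathrm{diag}(I,T)$ (on the second), together with the change of variables of \cite{scherer1997multiobjective} specialized to $\hat{\mathbf{D}}=0$, converts this feasibility problem into a genuine LMI in $(X,Y,\hat{\mathbf{A}},\hat{\mathbf{B}},\hat{\mathbf{C}},W)$ — the analogue of \cref{eq:lmi3} with $\hat{\mathbf{D}}=0$, an extra convex block for $W$, and the half-space $\Tr W<\gamma$. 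I would call the resulting convex set $\tilde{\mathcal{F}}_\gamma^{\mathcal{H}_2}$ and let $\tilde{\mathcal{G}}_\gamma^{\mathcal{H}_2}$ be its lift obtained by adjoining $(\Pi,\Xi)$ with $\Xi\Pi=I-YX$, exactly as in \cref{eq:SetF}.

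Second, I would transfer the topology verbatim from \Cref{sec:connectivity}. The set $\tilde{\mathcal{F}}_\gamma^{\mathcal{H}_2}$ is convex, hence path-connected; \Cref{lemma:connectivity_preliminary} still forces $\Pi,\Xi$ invertible on $\tilde{\mathcal{G}}_\gamma^{\mathcal{H}_2}$, so $(X,Y,\hat{\mathbf{A}},\hat{\mathbf{B}},\hat{\mathbf{C}},W,\Pi)\mapsto(X,Y,\hat{\mathbf{A}},\hat{\mathbf{B}},\hat{\mathbf{C}},W,\Pi,(I-YX)\Pi^{-1})$ is a homeomorphism $\tilde{\mathcal{F}}_\gamma^{\mathcal{H}_2}\times\mathrm{GL}_{n_x}\to\tilde{\mathcal{G}}_\gamma^{\mathcal{H}_2}$, and since $\mathrm{GL}_{n_x}$ has two path-connected components, $\tilde{\mathcal{G}}_\gamma^{\mathcal{H}_2}$ has exactly two, distinguished by $\operatorname{sign}\det\Pi$ (call them $\tilde{\mathcal{G}}_\gamma^{\mathcal{H}_2,+}$ and $\tilde{\mathcal{G}}_\gamma^{\mathcal{H}_2,-}$). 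The same formula $\Phi$ of \cref{eq:change_of_variables_output} automatically returns $D_{\mK}=\hat{\mathbf{D}}=0$; it is continuous, the LMI-based $\mathcal{H}_2$-synthesis reconstruction gives $\Phi(\tilde{\mathcal{G}}_\gamma^{\mathcal{H}_2})\subseteq\mathcal{L}_\gamma$, and the computation in the proof of \Cref{proposition:Phi_surjective} (with the $\mathcal{H}_2$ LMIs above in place of \cref{eq:LMI1}) shows every $\mK\in\mathcal{L}_\gamma$ has a preimage. Hence $\mathcal{L}_\gamma$ is the continuous image of a two-component set and has at most two path-connected components; if it has two, they are $\mathcal{L}_\gamma^{(1)}=\Phi(\tilde{\mathcal{G}}_\gamma^{\mathcal{H}_2,+})$ and $\mathcal{L}_\gamma^{(2)}=\Phi(\tilde{\mathcal{G}}_\gamma^{\mathcal{H}_2,-})$, exactly as in the proof of \Cref{Theo:disconnectivity}.

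Third, the diffeomorphism claim follows the argument of \Cref{Theo:Cn_homeomorphic}. Since $\mathcal{L}_\gamma$ is relatively open in the linear subspace $\{D_{\mK}=0\}$ it is a smooth manifold, and $\mathscr{T}_T$ is a smooth bijection of that subspace with smooth inverse $\mathscr{T}_{T^{-1}}$ that preserves $D_{\mK}=0$, the closed-loop transfer function $\mathbf{T}_{zw}$ (hence $\|\mathbf{T}_{zw}\|_2$), and the spectrum of $A_{\mathrm{cl}}$; so it restricts to a diffeomorphism of $\mathcal{L}_\gamma$ onto itself. For the swap, I would take $\mK\in\mathcal{L}_\gamma^{(1)}$ with certificate $P$ partitioned as in \cref{eq:Ydef}; then $P'=\mathrm{diag}(I,T^{-\tr})\,P\,\mathrm{diag}(I,T^{-1})$ certifies $\mathscr{T}_T(\mK)$, and inverting as in \cref{eq:Xdef} yields $\Pi'=T\Pi$, so $\operatorname{sign}\det\Pi'=\operatorname{sign}\det T\cdot\operatorname{sign}\det\Pi$. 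Thus for $\det T<0$ the preimage of $\mathscr{T}_T(\mK)$ produced by this construction lies in $\tilde{\mathcal{G}}_\gamma^{\mathcal{H}_2,-}$, i.e. $\mathscr{T}_T(\mathcal{L}_\gamma^{(1)})\subseteq\mathcal{L}_\gamma^{(2)}$; applying the same to $T^{-1}$ (also of negative determinant) gives the reverse inclusion, so $\mathscr{T}_T$ is a diffeomorphism $\mathcal{L}_\gamma^{(1)}\to\mathcal{L}_\gamma^{(2)}$.

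The main obstacle is the very first step: the squared $\mathcal{H}_2$ norm is not the feasibility value of a single LMI in the closed-loop data but the optimal value of an LMI program, so $\mathcal{L}_\gamma$ must be presented as the \emph{projection} of a convex LMI set (after eliminating $P$ and the slack $W$), and one has to check this projection neither creates nor merges components — which the change of variables settles directly, since it exhibits $\mathcal{L}_\gamma$ as $\Phi$ of the explicit two-component set $\tilde{\mathcal{G}}_\gamma^{\mathcal{H}_2}$. One must also verify that $D_{\mathrm{cl}}=0$ is automatic for the LQG data of \cite[Assumption~1]{zheng2021analysis} and that the Scherer congruence produces the claimed block identities for both $\mathcal{H}_2$ LMIs; both are standard but need to be spelled out. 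Everything after that is a line-by-line copy of the proofs of \Cref{proposition:Phi_surjective,lemma:Gn_connected_components,Theo:Cn_homeomorphic}.
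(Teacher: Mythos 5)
Your proposal is correct and follows essentially the same route as the paper: the non-convex closed-loop $\mathcal{H}_2$ certificate is convexified by the Scherer change of variables with $\hat{\mathbf{D}}=0$, lifted to a two-component set via $\Xi\Pi=I-YX$, mapped onto $\mathcal{L}_\gamma$ by the same $\Phi$ as in \cref{eq:change_of_variables_output}, and the swap of components under $\mathscr{T}_T$ with $\det T<0$ is obtained exactly as in \Cref{Theo:Cn_homeomorphic} (your certificate transformation $P\mapsto\mathrm{diag}(I,T^{-\tr})P\,\mathrm{diag}(I,T^{-1})$ yields $\Pi\mapsto T\Pi$, matching the paper's $\hat\Pi=T\Pi$, $\hat\Xi=\Xi T^{-1}$ argument). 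The only deviation is cosmetic: you use the dual (observability-Gramian, $\Tr(B_{\mathrm{cl}}^\tr P B_{\mathrm{cl}})$ with slack $W$) form of the $\mathcal{H}_2$ LMIs, while the paper uses the form with slack $\Gamma\succ C_{\mathrm{cl}}P^{-1}C_{\mathrm{cl}}^\tr$; both linearize under the same congruences, so the argument goes through identically.
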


\begin{remark}
Path connectivity of sublevel sets may imply  some further landscape properties (e.g., critical points and uniqueness of minimizing sets)~\cite{ortega2000iterative,martin1982connected}. In particular, using a special~definition of minimizing sets in~\cite[Definition 5.1]{martin1982connected}, Theorem 5.4 in \cite{martin1982connected} guarantees that the $\mathcal{H}_\infty$ control \eqref{eq:hinfcontrol} and LQG control \eqref{eq:h2control}  have a unique global minimizing set in some weak sense (see the definition of LTMS in \cite{martin1982connected}). The notion of LTMS in~\cite[Definition 5.1]{martin1982connected} does not rule out the normal notion of local minima and saddle points; we refer the readers to \cite{martin1982connected} for detailed discussions. Indeed, it is shown in \cite{zheng2021analysis} that saddle points exist in \eqref{eq:h2control}. A rigorous definition of strict local minima for \eqref{eq:hinfcontrol} or \eqref{eq:h2control}  requires some extra work due to unboundedness of similarity transformations. \hfill $\square$
\end{remark}

\section{Numerical examples} \label{section:examples}
We present two simple examples to illustrate our main result (\Cref{Theo:disconnectivity}). We consider the open-loop unstable~system in \Cref{example1} with $A=B_1=B_2=C_1=C_2=D_{21}=D_{12}=1$, and  $D_{11}=0
$. To ease visualization, We consider strictly proper controllers. In the left plots of Figure \ref{fig:Ex1}, we visualize $\tilde{\mathcal{K}}_\gamma$ for $\gamma=50$ and $2$. We can see that $\tilde{\mathcal{K}}_\gamma$ has two path-connected components. As we decrease $\gamma$, the feasible region shrinks. In the right plots of Figure \ref{fig:Ex1}, we change the value of $A$ to $-1$, and visualize
$\tilde{\mathcal{K}}_\gamma$ for the resultant system. In this case, $\tilde{\mathcal{K}}_\gamma$ are path-connected for both values of~$\gamma$.
\begin{figure}[t!]
\centering
\includegraphics[width=\columnwidth]{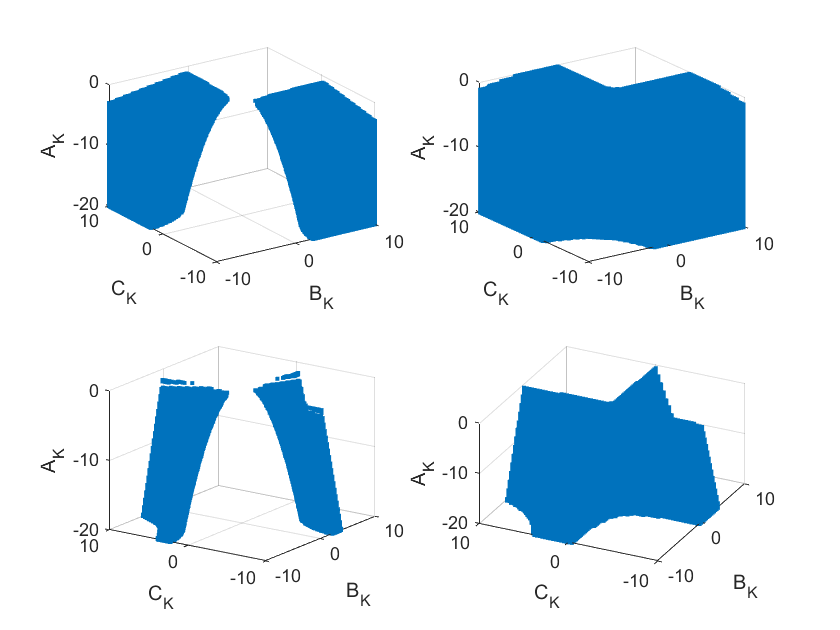}
\caption{The left two plots demonstrate the two path-connected components of $\mathcal{K}_\gamma$ for \Cref{example1}. The right plots show that $\mathcal{K}_\gamma$ becomes path-connected when $A$ is changed to $-1$ corresponding to an open-loop stable system.}
\label{fig:Ex1}
\end{figure}

\section{Conclusions} \label{section:conclusion}

We have proved that the set of $\mathcal{H}_\infty$-constrained full-order dynamical controllers has at most two path-connected components (cf. \Cref{Theo:disconnectivity}) and they are diffeomorphic under similarity transformations (cf. \Cref{Theo:Cn_homeomorphic}). We have also discussed various implications on direct policy search of robust dynamical controllers and on the strict sublevel sets of LQG and $\mathcal{H}_\infty$ control (cf. \Cref{theo:LQG-sublevel-set}). An important future direction is to develop provably convergent policy search methods for $\mathcal{H}_\infty$-constrained robust control problems.

%\section*{ACKNOWLEDGMENT}

\bibliographystyle{IEEEtran}
\bibliography{IEEEabrv,main}

\balance
\newpage

\appendix

\subsection{Auxiliary proofs}

\textbf{Proof of \Cref{Theo:Cn_homeomorphic}:} 
The proof is similar to \cite[Theorem 3.2]{zheng2021analysis}. We provide provide a proof sketch below.  
It suffices to show that, 
for any $T\in\mathbb{R}^{n_x\times n_x}$ with $\det T<0$, the mapping $\mathscr{T}_T$ restricted on $\mathcal{K}_{\gamma}^{+}$ gives a diffeomorphism from $\mathcal{K}_{\gamma}^{+}$ to $\mathcal{K}_{\gamma}^{-}$.
We only need to show that 
$$\mathscr{T}_T(\mathcal{K}_{\gamma}^+)\subseteq \mathcal{K}_{\gamma}^-, \qquad \text{and} \qquad  \mathscr{T}_{T^{-1}}(\mathcal{K}_{\gamma}^-)\subseteq \mathcal{K}_{\gamma}^+,$$
when $\det T<0$. Consider any arbitrary $
\mK\in\mathcal{K}_{\gamma}^+$.
Then there exists $\mZ=(X,Y,\hat{\mathbf{A}},\hat{\mathbf{B}},\hat{\mathbf{C}},\hat{\mathbf{D}},\Pi,\Xi)\in\mathcal{G}_{\gamma}^+$ such that $\Phi(\mZ)=\mK$. We let
$$
\hat \Pi = T\Pi,\quad \hat \Xi = \Xi T^{-1},
\quad
\hat \mZ = (X,Y,\hat{\mathbf{A}},\hat{\mathbf{B}},\hat{\mathbf{C}},\hat{\mathbf{D}},\hat\Pi,\hat\Xi).
$$
Note that $\det\hat{\Pi}=\det T\cdot\det \Pi < 0$, leading to $\hat\mZ\in \mathcal{G}_{\gamma}^-$. We can further verify  $\mathscr{T}_T(\mK)=\Phi(\hat{\mZ})\in \Phi(\mathcal{G}_{\gamma}^-)=\mathcal{K}_{\gamma}^-$ and consequently $\mathscr{T}_{T}(\mathcal{K}_{\gamma}^+)\subseteq \mathcal{K}_{\gamma}^-$.  The proof of $\mathscr{T}_{T^{-1}}(\mathcal{K}_{\gamma}^-)\subseteq \mathcal{K}_{\gamma}^+$ is similar. This completes the proof.  \hfill $\square$

\textbf{Proof of \Cref{Theo:connectivity_conditions}:} 
If $\mathcal{K}_\gamma$ has a non-minimal dynamical controller, then there exists a reduced-order stabilizing controller with an internal state dimension $(n_x-1)$, satisfying the $\mathcal{H}_\infty$ constraint.
Denote the state/input/output matrices of this controller as $(\tilde{A}_{\mK}, \tilde{B}_{\mK}, \tilde{C}_{\mK},\tilde{D}_{\mK})$. Then, this controller can be augmented to be a full-order controller in $\mathcal{K}_{\gamma}$ as
$$
\mK = 
\begin{bmatrix}
\tilde{D}_{\mK} & \tilde{C}{_\mK} & 0 \\
\tilde{B}{_\mK} & \tilde{A}{_\mK} & 0 \\
0 & 0 & -1
\end{bmatrix}
$$
Define a similarity transformation matrix
$$
T=\begin{bmatrix}
I_{n_x-1} & 0 \\ 0 & -1
\end{bmatrix}.
$$
By the proof of \cref{Theo:Cn_homeomorphic}, we can see that $\mK\in\mathcal{K}_{\gamma}^\pm$ implies $\mathscr{T}_T(\mK)\in\mathcal{K}_{\gamma}^\mp$. On the other hand, we can directly check that $\mathscr{T}_T(\mK)=\mK$. Therefore, we have 
$\mK\in \mathcal{K}_{\gamma}^+\cap \mathcal{K}_{\gamma}^-,$  
indicating that $\mathcal{K}_{\gamma}^+\cap \mathcal{K}_{\gamma}^-$ is nonempty. Consequently, $\mathcal{K}_{\gamma}$ is path-connected.

The proof for the second statement is identical to the proof of \cite[Theorem 3.3]{zheng2021analysis}, and hence is omitted here.
\hfill $\square$

\subsection{Connectivity of strict sublevel sets for LQG}
In this section, we briefly discuss the path-connectivity of the strict sublevel sets for LQG and present the proof for \Cref{theo:LQG-sublevel-set}. 

Consider the LTI system \cref{eq:Dynamic}. We can exactly recover the LQG setup in \cite{zheng2021analysis} by choosing state, input, and output matrices as
\begin{align*}
B_1&=\bmat{W^{\frac{1}{2}} & 0}, C_1=\bmat{Q^{\frac{1}{2}}\\ 0}, D_{11}=0, D_{12}=\bmat{0 \\ R^{\frac{1}{2}}},\\
D_{21}&=\bmat{0 & V^{\frac{1}{2}}}, D_{\mK}=0.
\end{align*}
where $W\succeq 0$, $Q\succeq 0$, $R\succ 0$, and $V\succ 0$. Then the LQG problem in \cite{zheng2021analysis} can be equivalently formulated as \cref{eq:h2control}. 
Since strictly proper controllers are used, we always have $D_{\mathrm{cl}}=0$.

We now present the proof for \Cref{theo:LQG-sublevel-set}.

\textbf{Proof of \Cref{theo:LQG-sublevel-set}:} 
It is well-known that
we have $\mK\in \mathcal{L}_\gamma$ if and only if there exist $P$ and $\Gamma$ such that,
\begin{align}
\label{eq:LMI1_H2}
 \begin{split}
 & \bmat{ A_{\mathrm{cl}}^\tr P+P A_{\mathrm{cl}} & P B_{\mathrm{cl}} \\  B_{\mathrm{cl}}^\tr P & -I}  \prec 0, \,\, \\
 & \bmat{P & C_{\mathrm{cl}}^\tr\\ C_{\mathrm{cl}} & \Gamma}\succ 0,\;\; \Tr(\Gamma)<\gamma.
 \end{split}
\end{align}
The above condition is not convex in $\mK$ and $P$.  
However, we can use the same change of variables as \eqref{eq:change_of_variables_output} in the main text. A controller $\mK\in \mathcal{L}_\gamma$ can be constructed if  $\exists (X,Y,\hat{\mathbf{A}},\hat{\mathbf{B}},\hat{\mathbf{C}},\Gamma)$ such that the following LMI holds\footnote{Since strictly proper controllers are considered, we always have $\hat{\mathbf{D}}=0$. We thus get rid of this matrix $\hat{\mathbf{D}}$ and only $(\hat{\mathbf{A}},\hat{\mathbf{B}},\hat{\mathbf{C}})$ show up in the LMI.},

\begin{align}\label{eq:H2LMI}
\begin{split}
  &\bmat{\mM_{11}^{(\mathrm{LQG})} & \hat{\mathbf{A}}^\tr+A & B_1\\
    \hat{\mathbf{A}}+A^\tr & \mM_{22}^{(\mathrm{LQG})} & YB_1+\hat{\mathbf{B}}D_{21}\\
    B_1 & (YB_1+\hat{\mathbf{B}}D_{21})^\tr & -I} \! \prec 0,\\
    &\begin{bmatrix}
X & I & (C_1 X+D_{12} \hat{\mathbf{C}})^\tr \\ I & Y & C_1^\tr \\ C_1 X+D_{12} \hat{\mathbf{C}} & C_1 & \Gamma
\end{bmatrix} \! \succ 0,\\
&\Tr(\Gamma)<\gamma,
    \end{split}
\end{align}
where the blocks $\mM_{11}^{(\mathrm{LQG})}$ and $\mM_{22}^{(\mathrm{LQG})}$ are defined as
\begin{align*}
    \begin{split}
        \mM_{11}^{(\mathrm{LQG})}&=AX+XA^\tr+B_2 \hat{\mathbf{C}}+ (B_2 \hat{\mathbf{C}})^\tr,\\
        \mM_{22}^{(\mathrm{LQG})}&=A^\tr Y+YA+\hat{\mathbf{B}} C_2+(\hat{\mathbf{B}}C_2)^\tr.
    \end{split}
\end{align*}
Similarly, we  can define the following set:
\begin{align*} 
\mathcal{F}_\gamma^{(\mathrm{LQG})}&:=
\bigg\{
(X,Y,\hat{\mathbf{A}},\hat{\mathbf{B}},\hat{\mathbf{C}},\Gamma)
\mid  \text{\cref{eq:H2LMI} is satisfied}
\bigg\},
\end{align*}
which is obviously convex and path-connected.
Next, we define the set $\mathcal{G}_\gamma^{(\mathrm{LQG})}$ as
\begin{align*}
\begin{split}
\mathcal{G}_\gamma^{(\mathrm{LQG})}:=
\bigg\{&(X,Y,\hat{\mathbf{A}},\hat{\mathbf{B}},\hat{\mathbf{C}},\Gamma,\Pi,\Xi)
\mid \Pi,\Xi\in\mathbb{R}^{n_x \times n_x},\, \\&(X,Y,\hat{\mathbf{A}},\hat{\mathbf{B}},\hat{\mathbf{C}},\Gamma)\in\mathcal{F}_{\gamma}^{(\mathrm{LQG})},\, \Xi\Pi =I-YX\bigg\}.
\end{split}
\end{align*}
Similar to \Cref{lemma:Gn_connected_components}, we claim that $\mathcal{G}_\gamma^{(\mathrm{LQG})}$ has two path-connected components. 

Now we need to prove the mapping in \cref{eq:change_of_variables_output} with $\hat{\mathbf{D}}=0$ is a continuous and surjective mapping from $\mathcal{G}_\gamma^{(\mathrm{LQG})}$ to $\mathcal{L}_\gamma$. The continuity part is trivial. To show the surjectiveness, we need to prove the following statements. 
\begin{enumerate}
    \item For any arbitrary strictly proper controller $ \mK \in \mathcal{L}_\gamma$, there exists $\mZ=(X,Y,\hat{\mathbf{A}},\hat{\mathbf{B}},\hat{\mathbf{C}},\Gamma,\Pi,\Xi)\in\mathcal{G}_{\gamma}^{(\mathrm{LQG})}$ such that 
    $
        \Phi(\mZ) = \mK
    $.
    \item For all $ \mZ=(X,Y,\hat{\mathbf{A}},\hat{\mathbf{B}},\hat{\mathbf{C}},\Gamma,\Pi,\Xi)\in\mathcal{G}_{\gamma}^{(\mathrm{LQG})}$, we have $\Phi(\mZ)\in \mathcal{L}_\gamma$.
\end{enumerate}
The second statement reduces to the standard controller reconstruction for LMI-based $\mathcal{H}_2$ synthesis, and hence is known to be true. To prove the first statement, let $\mK=\bmat{0 & C_{\mK} \\ B_{\mK} & A_{\mK}}\in \mathcal{L}_\gamma$ be arbitrary. Then there exists $P\succ 0$ such that \cref{eq:LMI1_H2} is feasible. We partition $P$ as \cref{eq:Ydef}, and define $(X,\Pi, T)$ via \cref{eq:Xdef}. Then we still have $YX+\Xi\Pi=I$. Based on \cref{eq:LMI1_H2}, we have
\begin{align*}
&\bmat{T^\tr & 0 \\ 0 & I }  \bmat{A_{\mathrm{cl}}^\tr P+P A_{\mathrm{cl}} & P B_{\mathrm{cl}} \\ B_{\mathrm{cl}}^\tr P & - I}  \bmat{T & 0 \\ 0 & I}  \prec 0,\\
&\bmat{T^\tr & 0 \\ 0 & I}\bmat{P & C_{\mathrm{cl}}^\tr\\ C_{\mathrm{cl}} & \Gamma}\bmat{T & 0 \\ 0 & I}\succ 0,\,\Tr(\Gamma)<\gamma,
\end{align*}
which exactly reduces to \cref{eq:H2LMI} if we choose $(\hat{\mathbf{A}},\hat{\mathbf{B}},\hat{\mathbf{C}})$ as defined in \cref{eq:change_of_var1}
with $D_{\mK}=0$. Thus, we have $\mZ=(X,Y,\hat{\mathbf{A}},\hat{\mathbf{B}},\hat{\mathbf{C}},\Gamma,\Pi,\Xi)\in\mathcal{L}_{\gamma}$ by the definition of $\mathcal{L}_{\gamma}$.
Now the first statement holds as desired.

Therefore, the number of the path-connected components of $\mathcal{L}_\gamma$ cannot be larger than the number of the path-connected components of $\mathcal{G}_\gamma^{(\mathrm{LQG})}$. Finally, we can slightly modify the proof of \Cref{Theo:Cn_homeomorphic} to show that the two path-connected components are diffeomorphic under similarity transformations. This completes the proof.
\hfill $\square$

%where $\mM_\gamma^{(\mathrm{L})}$ is defined as
% \begin{align*}
%     \mM_\gamma^{(\mathrm{L})}(X,Y,\hat{\mathbf{A}},\hat{\mathbf{B}},\hat{\mathbf{C}},\hat{\mathbf{D}},\Gamma)=\bmat{\mM_{11}^{(\mathrm{L})} & \mM_{12}^{(\mathrm{L})} & \mM_{13}^{(\mathrm{L})}\\
%     \mM_{21}^{(\mathrm{L})} & \mM_{22}^{(\mathrm{L})} & \mM_{23}^{(\mathrm{L})}\\
%     \mM_{31}^{(\mathrm{L})} & \mM_{32}^{(\mathrm{L})} & \mM_{33}^{(\mathrm{L})}}
% \end{align*}

% \begin{align}
%     \begin{split}
%     \notag
% & \mM_\gamma^{\mathrm{L}}(X,Y,\hat{\mathbf{A}},\hat{\mathbf{B}},\hat{\mathbf{C}},\hat{\mathbf{D}},\Gamma)\prec 0,\,
% \Tr(\Gamma)<\gamma,\,\hat{\mathbf{D}}=0\\
% &\begin{bmatrix}
% X & I & (C_1 X+D_{12} F)^\tr \\ I & Y & C_1^\tr \\ C_1 X+D_{12} F & C_1 & \Gamma
% \end{bmatrix} \! \succ 0,
%     \end{split}
% \end{align}

\end{document}